\documentclass[a4paper]{amsart}
\usepackage{graphicx}
\usepackage{hyperref}
\usepackage{pdfsync}

\usepackage[T1]{fontenc}
\usepackage[utf8]{inputenc} 


\usepackage{amsmath, amssymb, amsfonts, amscd, amsthm}

\newcommand{\maslov}[3]{\tau( #1,~#2,~#3    )}
\newcommand{\m}[3]{\maslov{\Gamma_{#1}}  {\Gamma_{#2}} {\Gamma_{#3}} }

\newcommand{\B}{B}

\newcommand{\Tr}{\operatorname{Tr}\,}
\newcommand{\Sp}{\mathbf{Sp}}
\newcommand{\Mp}{\mathbf{Mp}}
\newcommand{\U}{\mathbf{U}}
\newcommand{\WH}{W\!H}
\newcommand{\D}[1]{\mathcal{D}(#1)}

\newcommand{\Z}{\mathbb{Z}}
\newcommand{\N}{\mathbb{N}}
\newcommand{\C}{\mathbb{C}}
\newcommand{\R}{\mathbb{R}}
\newcommand{\Q}{\mathbb{Q}}
\newcommand{\F}{\mathbb{F}}

\newcommand{\z}{\mathbb{Z}}
\newcommand{\q}{\mathbb{Q}}

\newtheoremstyle{pedro}{}{}{\itshape}{}{\sc}{~--}{ }{\thmname{#1}\thmnumber{ #2}\thmnote{ (#3)}}

\newtheoremstyle{pedrodef}{}{}{}{}{\sc}{~--}{ }{\thmname{#1}\thmnumber{ #2}\thmnote{ (#3)}}

\theoremstyle{pedro}
\newtheorem{lem}{Lemma}[section]

\newtheorem{thm}[lem]{Theorem}

\newtheorem{prop}[lem]{Proposition}

\theoremstyle{remark}

\newtheorem{rmk}[lem]{Remark}

\theoremstyle{pedrodef}

\title{A link invariant with values in the Witt ring}
\author{Ga\"el Collinet and Pierre Guillot}

\address{
Universit\'{e} de Strasbourg \& CNRS\\
Institut de Recherche Math\'{e}matique Avanc\'{e}e\\
7~Rue Ren\'{e} Descartes\\
67084 Strasbourg, France}

\email{collinet@math.unistra.fr, guillot@math.unistra.fr}

\emergencystretch = 0.5em
\numberwithin{equation}{section}

\begin{document}

\maketitle

\begin{abstract}
  Using Maslov indices, we show the existence of oriented link
  invariants with values in the Witt rings of certain fields. Various
  classical invariants are closely related to this construction. We
  also explore a surprising connection with the Weil representation.
\end{abstract}


\section{Introduction}

In this paper we show that, given an appropriate field~$K$, one can
associate to any oriented link~$L$ in Euclidean~$3$-space an
element~$\Theta_K(L) \in W(K)$, where~$W(K)$ is the {\em Witt ring}
of~$K$. This element~$\Theta_K (L)$ is an isotopy invariant. 

Recall that an element in~$W(K)$ is given by a (non-degenerate)
quadratic form~$q$, over a finite dimensional~$K$-vector
space. However, two such quadratic forms~$q$ and~$q'$ may define the
same element in~$W(K)$ even if they are not isomorphic; in this case
we call them Witt equivalent. Witt equivalence can be more or less
subtle, depending strongly on the field~$K$. The easiest example is
that of~$K=\R$, the field of real numbers, for in this case~$q$
and~$q'$ are Witt equivalent precisely when they have the same
signature. Accordingly one has an isomorphism~$W(\R) \cong \Z$. By
contrast, $W(\Q)$ is considerably more complicated (see below).

There have been many efforts to use quadratic forms in order to define
link invariants, and the pattern has frequently been as follows: one
has a procedure to obtain a quadratic form from a link, but it is not
itself an isotopy invariant, so that one is reduced to extracting
coarser information. As early as 1932, Reidemeister in~\cite[\S7 and
\S8]{reide} uses the so-called ``Minkowski units'' of a certain
quadratic form of his design, and proves that they are
invariant. Better known is the construction of the {\em signature} of
a link, which is really the signature of a certain non-invariant
quadratic form (see~\cite{lick}, Theorem 8.9 with~$\omega=-1$).  In
retrospect it may be said, rather pedantically, that the quadratic
form is replaced by its Witt equivalence class in~$W(\R)$ in order to
get an invariant. In a sense, {\em in this paper we use the same
  strategy of working with the Witt ring, but over other
  fields}. However our construction is not as direct, and breaks the
above pattern.

Indeed we shall follow in the footsteps of Ghys and Gambaudo
(see~\cite{ghys}), who were explicitly thinking of the signature as
taking its values in~$W(\R)$ in order to solve the following
problem. Let~$s(L)$ denote the signature of the oriented link~$L$. If
we consider the {\em braid group}~$B_n$ on~$n$ strands, then we may
define a map~$f_n \colon B_n \to \Z$ by~$f_n(\beta ) = s( \hat \beta
)$. Here we use~$\hat \beta $ to denote the {\em closure} of the
braid~$\beta $, which is an oriented link in~$\R^3$. One may ask
whether~$f_n$ is a group homomorphism; it is not, and indeed Ghys and
Gambaudo obtain an explicit formula for
\[ c(\beta , \gamma ) :=   f_n(\beta \gamma ) - f_n(\beta ) - f_n(\gamma )\,
.   \]
Their formula is in terms of quadratic forms (as opposed to plain
integers). In spite of the complicated notation, let us give it here:
\[  f_n(\beta \gamma ) - f_n(\beta ) - f_n(\gamma )= \maslov{\Gamma_1}{\Gamma_{r(\beta
    )}}{\Gamma_{r(\beta \gamma )}} \, . \tag{*}
  \]
  Here~$r \colon \B_n \to GL_n(\R)$ is the {\em Burau representation}
  (``at~$t=-1$''), the notation~$\Gamma_g$ is used for the graph
  of~$g$, and~$\tau$ is the {\em Maslov index}, an algebraic
  construction which produces quadratic forms (up to Witt
  equivalence). 

  What we do in this paper is {\em to take (*) as a definition of a
    link invariant} instead. More precisely, we construct for each~$n$
  a map~$f_n \colon B_n \to W(K)$, for a suitable field~$K$, such that
  the analog of (*) holds.  This makes sense since the Maslov index is
  a very general procedure, not constrained to~$K= \R$. Then, we show
  that~$(f_n)_{n \ge 2}$ is a {\em Markov function}, that is, it is
  compatible with the Markov moves. The celebrated theorems of
  Alexander and Markov then imply that~$f_n(\beta ) = \Theta_K( \hat
  \beta )$ for some link invariant~$\Theta_K$. In particular, we have
  the following result.

\begin{thm} \label{thm-example}
Let~$K= \R$, or~$\Q$, or a finite field, or~$\Q(t)$, the field of
rational fractions in~$t$. Then there exists a unique oriented link
invariant~$\Theta_K$ with values in the Witt ring~$W(K)$, which takes
the zero value for disjoint unions of unknots, and with the following
extra property. Defining~$f_n \colon B_n \to W(K)$ by~$f_n(\beta ) =
\Theta_K(\hat \beta )$, one has 
\[   f_n(\beta \gamma )- f_n(\beta ) - f_n(\gamma ) =
\maslov{\Gamma_1}{\Gamma_{r_n(\beta )}}{\Gamma_{r_n(\beta \gamma )}} \,
. \tag{**} \]
\end{thm}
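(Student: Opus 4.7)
\smallskip

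\noindent\textbf{Proof plan.}
The plan is to carry out the three-step strategy already sketched just above the statement: construct for each $n$ a map $f_n \colon B_n \to W(K)$ satisfying (**), verify that $(f_n)_{n \ge 2}$ is a Markov function, and then invoke the theorems of Alexander and Markov. For the first step, I would begin by observing that the right-hand side of (**),
\[  c_n(\beta, \gamma) := \maslov{\Gamma_1}{\Gamma_{r_n(\beta)}}{\Gamma_{r_n(\beta\gamma)}},  \]
is a $2$-cocycle on $B_n$ with values in $W(K)$: here $\Gamma_g$ is Lagrangian for the natural symplectic structure on the ambient space preserved by the Burau representation at $t=-1$, and the cocycle identity follows from the fundamental $4$-term relation
\[  \tau(L_1,L_2,L_3) - \tau(L_1,L_2,L_4) + \tau(L_1,L_3,L_4) - \tau(L_2,L_3,L_4) = 0  \]
for the Maslov index, combined with its invariance under the simultaneous action of $\Sp$. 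Producing $f_n$ then amounts to exhibiting $c_n$ as a coboundary. I would do this by fixing values $f_n(\sigma_i) \in W(K)$ on the standard generators, extending by (**) to any word, and checking that the two braid relations $\sigma_i\sigma_{i+1}\sigma_i = \sigma_{i+1}\sigma_i\sigma_{i+1}$ and $\sigma_i\sigma_j = \sigma_j\sigma_i$ (for $|i-j| \ge 2$) are respected; this is a finite Maslov computation using the explicit matrices $r_n(\sigma_i)$. Up to addition of a homomorphism $B_n \to W(K)$ the resulting $f_n$ is unique, and a single normalization will pin it down.

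I would then turn to Markov invariance. Conjugation invariance $f_n(\gamma\beta\gamma^{-1}) = f_n(\beta)$ should follow by expanding both sides with (**) and using the $\Sp$-equivariance of $\tau$, which lets one translate the three Lagrangian arguments simultaneously by $r_n(\gamma)^{-1}$. The stabilization move $\beta \mapsto \iota(\beta)\sigma_n^{\pm 1}$, where $\iota \colon B_n \hookrightarrow B_{n+1}$ is the standard inclusion, is more delicate: I would analyse the block structure of $r_{n+1}$ on the image of $\iota$, on which $\sigma_n$ acts nontrivially only on a small subspace, and show via a block decomposition of the Maslov index that the extra contribution from $\sigma_n^{\pm 1}$ cancels the new term produced by (**).

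Once Markov invariance is established, Alexander's theorem produces, for any oriented link $L$, a braid $\beta$ with $\hat\beta = L$, and Markov's theorem ensures that $\Theta_K(L) := f_n(\beta)$ does not depend on this choice. The normalization $\Theta_K = 0$ on disjoint unions of unknots is a consequence of $f_n(1) = 0$ (immediate from (**) with $\beta = \gamma = 1$), since the identity braid in $B_n$ closes to the trivial $n$-component unlink. Uniqueness is formal: two candidate invariants would differ by a map $B_n \to W(K)$ that, by (**), is a homomorphism, and since $B_n^{\mathrm{ab}} \cong \Z$ is generated by the class of $\sigma_i$, whose closure is a trivial link, this homomorphism must vanish. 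The principal obstacle is thus the existence of $f_n$: one must show that the Burau pullback of the Maslov cocycle is cohomologically trivial on $B_n$ and select a canonical primitive; the Markov stabilization check is the next most delicate step, as it ties together the Burau representations at consecutive values of $n$ through the Maslov index.
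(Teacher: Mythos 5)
Your overall architecture matches the paper's: realize the right-hand side of (**) as a $2$-cocycle pulled back along the Burau representation, produce a normalized primitive $f_n$, verify the two Markov conditions, and invoke Alexander--Markov; your uniqueness argument via $B_n^{\mathrm{ab}}\cong\Z$ is also the paper's. But the step you yourself flag as most delicate is where the proposal has a genuine gap. Stabilization amounts to the vanishing $\maslov{\Gamma_1}{\Gamma_{r_{n+1}(\beta)}}{\Gamma_{r_{n+1}(\beta\sigma_n^{\pm})}}=0$ for all $\beta\in B_n$, and no ``block decomposition of the Maslov index'' is available here: while $\Gamma_1$ and $\Gamma_{r_{n+1}(\beta)}$ do split along $V_{n+1}=V_n\oplus(triv)$, the third lagrangian $\Gamma_{r_{n+1}(\beta\sigma_n^{\pm})}$ does not, because $\sigma_n$ mixes the last two coordinates, so additivity of $\tau$ gives nothing. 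The paper's mechanism is of a different nature: it invokes Thomas's criterion (Lemma~\ref{lem-thomas}), which forces $\maslov{\ell_1}{\ell_2}{\ell_3}=0$ from an equality of intersection dimensions, and reduces that dimension count to the fact that $d_n(\beta)=\dim\ker(r_n(\beta)-\mathrm{Id})$ is itself a Markov function (Proposition~\ref{prop-petit-markov}, proved by an explicit column operation on $\widetilde M\,{}^T\Sigma_n^{\pm}-\mathrm{Id}$). Some such quantitative vanishing criterion is indispensable; without one the argument does not close.

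Two smaller points. Translating the three lagrangians simultaneously by $r_n(\gamma)\times r_n(\gamma)$ on $\D{V_n}$ only shows the cocycle is conjugation-invariant, $c(\gamma\beta\gamma^{-1},\gamma\delta\gamma^{-1})=c(\beta,\delta)$; it does not by itself give $f_n(\gamma\beta\gamma^{-1})=f_n(\beta)$, since a residual homomorphism $B_n\to W(K)$ must still be killed. This can be repaired using $B_n^{\mathrm{ab}}\cong\Z$ and the normalization, but the paper's route is sharper: the swap $(x,y)\mapsto(y,x)$ is an anti-isometry of $\D{V}=(-V)\oplus V$, which makes the Meyer cocycle symmetric, $c(g,h)=c(h,g)$, whence $f(\beta\gamma)=f(\gamma\beta)$ at once (Proposition~\ref{prop-conj-inj}); this symmetry is also what lets the paper verify the braid relation abstractly, for an arbitrary unitary representation, rather than by explicit matrix computation. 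Finally, for $K=\R,\Q,\F_p$ with $\alpha(t)=-1$ the form $H_n$ is degenerate for every other $n$, so $r_n$ must first be routed through a larger braid group as in Remark~\ref{rmk-cofinal}; your sketch tacitly assumes each $r_n$ preserves a nondegenerate form.
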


Here~$r_n \colon B_n \to GL_n(K)$ is the appropriate version of the
Burau representation. (We caution the reader who may glance at the
results in the text now that for~$K= \Q(t)$ we actually mention a link
invariant with values in a ring written~$\WH(\Q(t))$ and called the
hermitian Witt ring; luckily~$\WH(\Q(t)) \subset W(\Q(t))$ in this
case and the theorem holds as stated. These details need not distract
us now.) This Theorem appears in the text as Theorem~\ref{thm-main}.

When it comes to computing~$\Theta_K(L)$ explicitly, we have to rely
on (**), after having found a braid group element whose closure
is~$L$. We hasten to add that we have made a {\sc Sage} script
available, which can perform the calculations automatically. It
outputs a diagonal matrix representing the quadratic
form~$\Theta_K(L)$. In the rest of this Introduction, we assume that
the computational side of things is thus taken care of, and comment on
the results. Before anything else though, this is as good a place as
any to point out that (**) implies that~$f_n(\beta ) = -f_n(
\beta^{-1})$, so that~$\Theta_K(L') = - \Theta_K(L)$ if~$L'$ is the
mirror-image of~$L$. In particular~$2\Theta_K(L) = 0$ if~$L$ and~$L'$
are isotopic.

First and foremost, for~$K= \R$ one can interpret the result by Ghys
and Gambaudo as saying that~$\Theta_\R(L)$ agrees with the signature
of~$L$. Things are already more interesting with~$K= \Q$. In this case
(see~\cite{milnor}) there is an exact sequence (where the arrows are
completely explicit)
$$ 0 \longrightarrow \Z \longrightarrow W(\Q) \longrightarrow
\bigoplus_{p} W(\F_p) \longrightarrow 0 \, .   $$
This sequence is split by the homomorphism~$W(\Q) \to W(\R) \cong
\Z$. Moreover, for~$p$ odd the group $W(\F_p)$ is either~$\Z/2\times
\Z/2$ or~$\Z/4$ according as~$p$ is~$1$ mod~$4$ or not, while~$W(\F_2)
= \Z/2$.  For each oriented link~$L$, we obtain a set of primes which
is an invariant of~$L$, namely the set of those~$p$ for
which~$\Theta_\Q(L)$ maps to a non-zero element via the residue
map~$W(\Q) \to W(\F_p)$. Of course, for each~$p$ the value
in~$W(\F_p)$ is also an invariant.

The truly interesting case is~$K=\Q(t)$. The ring~$W(\Q(t))$ is very
rich, so the first thing we should do is extract easily computable
information from~$\Theta_{\Q(t)}(L)$.  We do this by showing that the
above theorem yields a Laurent-polynomial invariant akin to the
Alexander-Conway polynomial. We also exploit our method to produce a
``signature'' for each complex number~$\omega $ of module~$1$, that is
a~$\Z$-valued invariant. When~$\omega$ is a root of unity, this
invariant is related to the Levine-Tristram signature, as follows
again from~\cite{ghys}. Thus~$\Theta_{\Q(t)}(L)$ seems to ``contain''
many other invariants, and its first virtue is unification.

However, there is more to~$\Theta_{\Q(t)}(L)$ than the polynomial and
the signatures. One has an exact sequence
$$ 0 \longrightarrow W(\Q) \longrightarrow W(\Q(t))
\longrightarrow \bigoplus_P  W(\Q[t]/(P) )\longrightarrow 0 \, .   $$
Here the direct sum runs over all irreducible polynomials, so
that~$\kappa= \Q[t]/(P)$ is a number field. Finally, the Witt
ring~$W(\kappa)$ fits into yet another exact sequence, similar to that
for~$\Q$ but involving the Witt ring of the ring of integers
in~$\kappa$ (for~$\kappa= \Q$, this is the ring~$\Z$, and~$W(\Z) =
\Z$; we shall not encounter Witt rings of rings which are not fields
elsewhere in this paper). All the arrows are quite explicit, so even
though~$W(\Q(t))$ appears to be huge, it is in principle always
possible to decide in finite time whether~$\Theta_{\Q(t)}(L)$ is zero
(and thus possibly show that~$L$ is not the trivial knot). 

So far we have described the contents of the three sections of the
paper following this Introduction. In Section~\ref{sec-def-maslov} we
present background material and give simple, sufficient conditions for
an invariant as above to be defined out of representations of the
braid groups. In Section~\ref{sec-burau} it is shown that these
conditions are satisfied in the case of the Burau
representation. Examples are provided in Section~\ref{sec-examples}.

Let us now say a word about Section~\ref{sec-weil}, which explores the
ideas behind the proof of Theorem~\ref{thm-example} in the
case~$K=\R$, rather than its statement, and connects them to the
so-called Weil representation. In summarizing Section~\ref{sec-weil}
we shall presently provide a sketch of the key steps in the proof of
the Theorem (the assumption~$K=\R$ allowing for simpler arguments).

The first ingredient is the observation that the Burau representation
at~$t=-1$ carries a~$B_{2n}$-invariant symplectic form, thus providing
a map~$r_{2n} \colon B_{2n} \to \Sp_{2n}(\R)$. Now, we
have~$\pi_1(\Sp_{2n}(\R)) = \Z$, so that~$\Sp_{2n}(\R)$ possesses many
covers; we shall be particularly interested in the simply-connected
cover~$\widetilde{\Sp}_{2n}(\R)$ and the~$2$-fold cover~$M_{2n}$, also
known as the metaplectic group.

The second ingredient is the cohomological fact that~$H^2(B_n, \Z) =
0$, which implies that the map~$r_{2n} \colon B_{2n} \to
{\Sp}_{2n}(\R)$ can be lifted to a map~$r'_{2n} \colon B_{2n} \to
\widetilde{\Sp}_{2n}(\R)$. The kernel of the
map~$\widetilde{\Sp}_{2n}(\R) \to {\Sp}_{2n}(\R)$ is~$\Z$, and once we
describe~$\widetilde{\Sp}_{2n}(\R)$ explicitly using a two-cocycle~$c$
with values in~$\z$, then finding~$r'_{2n}$ amounts to finding a
one-cocycle on~$B_{2n}$ whose coboundary is~$c$. That one-cocycle
is the map~$f_{2n}$ which appears in Theorem~\ref{thm-example}.

These two ingredients must be slightly refined in the case of a
general field~$K$, but the spirit of the construction of the map~$f_n$
is always the same. Topological arguments are replaced by the
apparatus of Maslov indices (which are needed in order to make precise
statements anyway).

In Section~\ref{sec-weil}, the emphasis is on the induced map~$B_{2n}
\to M_{2n}$, for~$M_{2n}$ is known to act on an infinite-dimensional Hilbert
space via the Weil representation. Thus~$B_{2n}$ also acts on this
space, and the representation is also known to have a ``trace'' in
some technical sense. This trace has been computed by Thomas
(\cite{thomas}), who provides explicit formulae involving Maslov
indices. Comparing these with the material in
Section~\ref{sec-def-maslov}, we end up proving that the trace is a
link invariant, which can be expressed in terms of the
Alexander-Conway polynomial and the signature (which appears in the
guise of~$\Theta_\R$, see above). Note that strictly speaking our
result about the trace of the Weil representation is not deduced from
Theorem~\ref{thm-example}; rather, it consitutes a variant on its
proof. 

We conclude the paper with some remarks about the Weil representation
of finite fields and the work of Goldschmidt and Jones.

In a subsequent paper it will be established that, at the price of
more machinery including a recent theorem of Barge and Lannes on
Maslov indices over rings, we can follow the above outline
over~$\z[\frac{1} {2}, t, t^{-1} ]$ instead of a field. There results
a single link invariant which specializes to all the others, thus
pushing the unification a step further. What is more, it will be shown
that our method extends to the case of {\em coloured links}, for which
the braid groups have to be replaced by an appropriate groupoid.

\noindent {\em Acknowledgments.} The authors wish to thank \'Etienne
Ghys, Jean Barge, and Christian Kassel for their interest in the
paper. Our thanks extend to Hubert Rubenthaler for helpful discussions
on the harmonic analysis underlying the Weil representation. Pierre
Torasso pointed out the reference~\cite{thomas}, and we are grateful
for his help. Finally, we are indebted to Ivan Marin for discovering
an embarrassing mistake in an earlier version of the paper. Also, we
would like to thank the referee for raising subtle technical points
about \S\ref{sec-weil} and generally encouraging us to develop that
section. 

\section{Background material} \label{sec-def-maslov}

\subsection{The braid groups}
The {\em braid group on~$n$ strands}~$\B_n$ is the group generated
by~$n-1$ generators~$\sigma_1, \ldots, \sigma_{n-1}$ subject to the
relations~$\sigma_i \sigma_j = \sigma_j \sigma_i$ for $|i - j|>2$, while 
$$ \sigma_i \sigma_{i+1} \sigma_i = \sigma_{i+1} \sigma_i
\sigma_{i+1} \, .   $$
The well-known interpretation of~$\B_n$ in terms of geometric braids
(see \cite{kassel}, Theorem~1.12) allows one to define the operation
of {\em closure} $\beta \mapsto \hat \beta $ ({\em loc.\ cit.},
\S2.2): here~$\beta \in\B_n$ and~$\hat \beta $ is an oriented link in
Euclidean~$3$-space. The celebrated theorem of Alexander ({\em loc.\
  cit.}, \S2.3) asserts that any oriented link in~$\R^3$ is isotopic
to one of the form~$\hat \beta $ for some~$\beta $ belonging to
some~$\B_n$.

This process defines an equivalence relation on the disjoint
union~$\coprod_{n\ge 2} \B_n$, according to which~$\beta \sim \gamma $
whenever the links~$\hat \beta $ and~$\hat \gamma $ are
isotopic. Markov's theorem ({\em loc.\ cit.}, \S2.5) describes this
relation explicitly. Here we shall state the result in the following
form: a map~$f= \coprod_{n\ge 2} f_n$ on the above disjoint union,
with values in any set~$E$, is constant on the equivalence classes if
and only if the two following properties are satisfied:

(i) each $f_n$ is invariant under conjugation, that
is~$f_n(\gamma^{-1} \beta \gamma ) = f_n (\beta )$ for all~$\beta,
\gamma \in \B_n$.

(ii) for all~$n\ge 2$ and all~$\beta \in \B_n$ one has~$f_{n+1}(\iota_n(\beta)
\sigma_n^{\pm 1}) = f_n(\beta )$, where~$\iota_n$ denotes the inclusion
of~$\B_n$ into~$\B_{n+1}$.

Such a map is usually called a {\em Markov function}. It follows that
the value of a Markov function on a braid~$\beta $ only depends on the
closure~$\hat \beta $, and in view of Alexander's theorem we see that
a Markov function gives an oriented link invariant (and conversely).

Because of condition (i), a traditional strategy in order to produce
Markov functions is to start with a collection of
representations~$r_n : \B_n \to GL(V_n)$, where~$V_n$ is a module
over some ring~$R$, and then rely on functions which are known to be
conjugation-invariant on the group of invertible matrices, like the
trace or determinant. A standard example is the Alexander-Conway
polynomial, which relies on the Burau representation and the
determinant ({\em loc.\ cit.}, \S3.4). Here~$R= \z[t, t^{-1}]$ and the
invariant takes its values in~$R$.

\subsection{Witt rings and Maslov indices}
\label{subsec-maslov}
Let~$K$ be a field of characteristic different from~$2$. Suppose that~$K$ is
endowed with an involution~$\sigma $, and let~$k=K^\sigma$ denote the
field of fixed elements. We shall write~$\bar x$ instead of~$\sigma
(x)$.

Let~$V$ be a vector space over~$K$. A map~$h : V\times V \to K$ is
called an {\em anti-hermitian form} (resp.\ a hermitian form) when it
is linear in one variable and satisfies
$$  h(y, x) = - \overline{h(x, y)} \qquad (\textnormal{resp.}~h(y, x) =
\overline{h(x, y)}) \, .    $$
In this case~$V$ is called an anti-hermitian space (resp.\ a hermitian
space). The form~$h$ is called non-degenerate when the determinant of
the corresponding matrix (in any basis) is non-zero.

Let~$V$ be anti-hermitian. A {\em lagrangian} is a
subspace~$\ell\subset V$ such that~$\ell = \ell^\perp$. We say
that~$V$ is {\em hyperbolic} when it is the direct sum of two
lagrangians.

Now given a hyperbolic, non-degenerate, anti-hermitian space~$V$ with
form~$h$ and three lagrangians~$\ell_1, \ell_2$ and~$\ell_3$, we shall
describe their {\em Maslov index}, which is a certain element
$$ \maslov {\ell_1} {\ell_2} {\ell_3} \in \WH(K, \sigma ) \, .   $$
Here~$\WH(K, \sigma )$ is the hermitian Witt ring of~$K$: see
\cite{milnor}. For example~$\WH(K, \sigma )$ may be defined as the
quotient of the Grothendieck ring of the category of non-degenerate
hermitian spaces by the ideal consisting of all hyperbolic
spaces. 

The Maslov index~$\maslov {\ell_1} {\ell_2} {\ell_3} $ is
then the non-degenerate space corresponding to the following hermitian
form on~$\ell_1\oplus\ell_2\oplus\ell_3$:
$$ H( \mathbf{v}, \mathbf{w}) = h(v_1, w_2 - w_3) + h(v_2, w_3 - w_1)
+ h(v_3, w_1 - w_2) \, .   $$
More precisely, if this hermitian space is degenerate, we take the
quotient by its kernel.

We claim that this construction enjoys the following properties:

(i) Dihedral symmetry:
$$ \maslov {\ell_1} {\ell_2} {\ell_3} = - \maslov {\ell_3} {\ell_2}
{\ell_1} = \maslov{\ell_3}{\ell_1}{\ell_2} \, .   $$

(ii) Cocycle condition:
$$ \maslov {\ell_1} {\ell_2} {\ell_3} + \maslov {\ell_1} {\ell_3}
{\ell_4} = \maslov {\ell_1} {\ell_2} {\ell_4} + \maslov {\ell_2}
{\ell_3} {\ell_4} \, .      $$

(iii) Additivity: if~$\ell_1, \ell_2$ and~$\ell_3$ are lagrangians
in~$V$, while~$\ell'_1, \ell'_2$ and~$\ell'_3$ are lagrangians
in~$V'$, then~$\ell_i\oplus\ell'_i$ is a lagrangian in the orthogonal
direct sum~$V\oplus V'$ and we have
$$ \maslov {\ell_1\oplus\ell'_1} {\ell_2\oplus\ell'_2} {\ell_3\oplus\ell'_3} = \maslov {\ell_1} {\ell_2}
{\ell_3}  + \maslov {\ell'_1} {\ell'_2} {\ell'_3}  \, .  $$

(iv) Invariance: for any~$g\in \U(V)$ (the unitary group), one has
$$ \maslov{g\cdot\ell_1}{g\cdot\ell_2}{g\cdot\ell_3} = \maslov{\ell_1}{\ell_2}{\ell_3} \, .   $$

In fact, in the particular case when~$\sigma = Id$, and thus~$k=K$, an
anti-hermitian form is nothing but a symplectic form, and a hermitian
form is just a symmetric, bilinear form. In this setting, with the
Maslov index taking its values in the classical Witt ring~$W(k)$, the
properties above have been established in \cite{lionvergne} (over the
reals, but the proof is not different for other fields).

To deal with the general case, first note that the real part of an
anti-hermitian form~$h$, that is the map $s(x,y)= \frac{1} {2}(h(x,y) +
\overline{h(x,y)})$, is a symplectic form on the space~$V_k$ (with
scalars restricted to~$k$). Likewise, hermitian forms give rise to
symmetric, bilinear forms, thus yielding a map~$R_{K, k} : \WH(K, \sigma )\to
W(k)$ which is known to be injective (\cite{milnor}). 

Lagrangians for~$h$ are lagrangians for~$s$, and it is immediate that
the Maslov index computed in~$V$ corresponds to the Maslov index
computed in the symplectic space~$V_k$ under the map~$R_{K, k}$. It follows
that the properties (i), (ii), (iii) and (iv) hold in the general
situation as well.

The term ``cocycle condition'' is employed because the map 
$$ c : \U(V) \times \U(V) \longrightarrow \WH(K, \sigma )  $$
defined by~$c(g, h) = \maslov{\ell}{g\cdot\ell}{gh\cdot\ell}$ is then a
two-cocycle on the unitary group~$\U(V)$, for any choice of
lagrangian~$\ell$. There is a corresponding central extension : 
$$ 0 \longrightarrow \WH(K, \sigma ) \longrightarrow
\widetilde{\U(V)} \longrightarrow \U(V) \longrightarrow 1 \, ,   $$
in which the group~$\widetilde{\U(V)}$ can be seen as the
set~$ \U(V)\times \WH(K, \sigma )$ endowed with the twisted multiplication 
$$ (g, a) \cdot (h, b) = (gh, a + b + c(g, h)) \, .   $$

We conclude these definitions with a simple trick. The constructions
above, particularly the definition of the two-cocycle, involve
choosing a lagrangian in an arbitrary fashion. Moreover, the
anti-hermitian space~$V$ needs to be hyperbolic, while many spaces
arising naturally are not. Thus it is useful to note the
following. Starting with any anti-hermitian space~$(V,h)$,
put~$\D{V}=(V,-h) \oplus (V, yh)$, where the sum is
orthogonal. Then~$\D{V}$ is non-degenerate if~$V$ is, and it is
automatically hyperbolic. Indeed, for any~$g\in\U(V)$, let~$\Gamma_g$
denote its graph. Then~$\Gamma_g$ is a lagrangian in~$\D{V}$, and in
fact~$\D{V}=\Gamma_1 \oplus \Gamma_{-1}$. From now on, we will
see~$\Gamma_1$ as our preferred lagrangian. Note that there is a
natural homomorphism~$\U(V)\to \U(\D{V})$ which sends~$g$ to~$1\times
g$.

\subsection{Some two-cocycles on the braid groups} \label{subsec-conj-inv}

Let us consider a homomorphism~$r : \B_n\to \U(V)$ for some
anti-hermitian space~$V$, and let us compose it with the
map~$\U(V)\to\U(\D{V})$ just described. Let us call~$\rho : \B_n \to
\U(\D{V})$ the resulting map. We obtain a two-cocycle on~$\B_n$ by the
formula
$$ c(\beta , \gamma ) = \maslov{\Gamma_1} {\rho (\beta )\cdot
  \Gamma_1} { \rho (\beta \gamma )\cdot \Gamma_1} = \maslov{\Gamma_1}
{ \Gamma_{r (\beta )}} {\Gamma_{r (\beta \gamma )}} \, ;  $$
indeed this is the pull-back of the two-cocycle on~$\U(\D{V})$ defined
above. 

We show below that this two-cocycle must be a coboundary, so
that there must exist a map~$f : \B_n \to \WH(K)$ such that
\begin{equation} \label{eq-meyer-additive}
f( \beta \gamma ) = f(\beta ) + f(\gamma ) + c(\beta, \gamma ) \, .
\end{equation}
In other words, we shall see that~$\rho $ can be lifted to a
map~$\tilde{\rho } : \B_n\to \widetilde{\U(\D{V})}$.

A word of terminology. Given~$g, h \in\U(V)$, the particular Maslov
index 
$$ \maslov{\Gamma_1}{\Gamma_g}{\Gamma_{gh}} \, ,   $$
which involves the hyperbolic space~$\D{V}$, is often called
the {\em Meyer index} of~$g$ and~$h$. By extension, we shall also
call~$c(\beta, \gamma )$ the Meyer index of the braids~$\beta $
and~$\gamma $, with respect to~$r$. A map~$f : \B_n \to \WH(K)$
satisfying eq.\ \eqref{eq-meyer-additive} will be called {\em
  Meyer-additive} for obvious reasons.

The first thing to notice is:

\begin{prop}\label{prop-conj-inj}
Any Meyer-additive function~$f$ is conjugation-invariant.
\end{prop}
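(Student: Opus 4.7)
The plan is to exploit two facts: twisting $f$ by conjugation yields another Meyer-additive function, and the difference of two Meyer-additive functions is a homomorphism to the abelian group $\WH(K)$. I would first fix $\gamma \in \B_n$ and set $\tilde{f}_\gamma(\beta) := f(\gamma^{-1}\beta\gamma)$. The opening step is to show that $\tilde{f}_\gamma$ is itself Meyer-additive, which reduces to the identity $c(\gamma^{-1}\beta_1\gamma,\, \gamma^{-1}\beta_2\gamma) = c(\beta_1, \beta_2)$. This follows from property~(iv) applied to the element $r(\gamma) \times r(\gamma) \in \U(\D{V})$: this element fixes $\Gamma_1$ (since it acts on graphs by conjugation and $r(\gamma)\cdot 1\cdot r(\gamma)^{-1}=1$) while sending each $\Gamma_{r(\beta)}$ to $\Gamma_{r(\gamma\beta\gamma^{-1})}$, which is exactly the symmetry required.

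Once this is in hand, $\phi_\gamma := \tilde{f}_\gamma - f$ is a group homomorphism $\B_n \to \WH(K)$, because $\tilde{f}_\gamma$ and $f$ share the same coboundary $c$. Since $\WH(K)$ is abelian, $\phi_\gamma$ factors through the abelianization $\B_n^{ab} \cong \Z$, which is generated by the common image of any $\sigma_i$ (all the $\sigma_i$ are mutually conjugate). Writing $e\colon \B_n \to \Z$ for the exponent-sum map, we have $\phi_\gamma(\beta) = e(\beta)\,a_\gamma$ where $a_\gamma := \phi_\gamma(\sigma_1) \in \WH(K)$.

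Next, I would show that $\gamma \mapsto a_\gamma$ is itself a homomorphism. From the identity $\tilde{f}_{\gamma_1\gamma_2}(\beta) = \tilde{f}_{\gamma_2}(\gamma_1^{-1}\beta\gamma_1)$, adding and subtracting $f(\gamma_1^{-1}\beta\gamma_1)$ gives $\phi_{\gamma_1\gamma_2}(\beta) = \phi_{\gamma_1}(\beta) + \phi_{\gamma_2}(\gamma_1^{-1}\beta\gamma_1)$. Since $\phi_{\gamma_2}$ factors through $e$ and $e(\gamma_1^{-1}\beta\gamma_1) = e(\beta)$, evaluating at $\sigma_1$ produces $a_{\gamma_1\gamma_2} = a_{\gamma_1} + a_{\gamma_2}$.

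Therefore $\gamma \mapsto a_\gamma$ also factors through $\Z$, giving $a_\gamma = e(\gamma)\,a_{\sigma_1}$. A direct computation yields $a_{\sigma_1} = f(\sigma_1^{-1}\sigma_1\sigma_1) - f(\sigma_1) = 0$, which forces $a_\gamma = 0$ for every $\gamma$, and hence $\phi_\gamma \equiv 0$. This gives $f(\gamma^{-1}\beta\gamma) = f(\beta)$, as desired. The main subtlety lies in the very first step --- the conjugation-invariance of the Meyer cocycle itself; once that symmetry is in hand, everything afterwards is a formal consequence of the fact that $\B_n$ has cyclic abelianization, where the tautological vanishing $a_{\sigma_1} = 0$ does all the work.
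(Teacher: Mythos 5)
Your proof is correct, but it takes a genuinely different route from the paper's. The paper proves the stronger statement that the Meyer cocycle is \emph{symmetric}, $c(\beta,\gamma)=c(\gamma,\beta)$, by applying an anti-unitary swap $(x,y)\mapsto(y,x)$ of the two copies of $V$ inside $\D{V}$ (which reverses the sign of the form, hence of the Maslov index) and then massaging the resulting identity with the dihedral and cocycle properties; symmetry of $c$ immediately gives $f(\beta\gamma)=f(\gamma\beta)$, with no input from the structure of $\B_n$ whatsoever. You instead use only the weaker fact that $c$ is invariant under simultaneous conjugation --- which follows from property (iv) applied to $r(\gamma)\times r(\gamma)$, exactly the unitary automorphism the paper itself invokes later in the existence proof --- and then compensate with a group-theoretic argument: the pairing $(\gamma,\beta)\mapsto \phi_\gamma(\beta)$ is biadditive, factors through $\B_n^{ab}\times\B_n^{ab}\cong\Z\times\Z$, and vanishes on the diagonal, hence vanishes identically. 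Each step checks out, including the tautological $a_{\sigma_1}=\phi_{\sigma_1}(\sigma_1)=0$. The trade-off is that the paper's argument applies verbatim to a Meyer-additive function on \emph{any} group mapping to $\U(V)$, whereas yours exploits the cyclicity of $\B_n^{ab}$: for a general group your diagonal-vanishing argument only shows the pairing is alternating on the abelianization, which need not force it to be zero. Within the braid-group setting of the Proposition, however, your proof is complete and arguably requires less from the Maslov-index formalism (no anti-unitary maps).
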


\begin{proof}
  We shall need the following simple property of Maslov
  indices. Let~$\ell_1$, $\ell_2$, $\ell_3$ be lagrangians in some
  hyperbolic, anti- hermitian space~$W$ with form~$h$. Assume
  that~$\alpha : W \to W$ is a linear map such that~$h(\alpha (x),
  \alpha (y)) = - h(x, y)$ (in other words, $\alpha$ is a
  homomorphism~$(W,h) \to (W,-h)$). Then~$\alpha \cdot \ell_i$ is a
  lagrangian in~$W$, and
$$ \maslov{\alpha \cdot \ell_1} {\alpha \cdot \ell_2}{\alpha \cdot
  \ell_3} = - \maslov{\ell_1}{\ell_2}{\ell_3} \, . $$
This is clear.

Now apply this to~$W= \D{V} = (-V) \oplus V$ and~$\alpha (x, y)= (y,
x)$. We obtain in particular 
$$ \maslov{\Gamma_{g^{-1}}} {\Gamma_1} {\Gamma_h} = -
\maslov{\Gamma_g} {\Gamma_1} {\Gamma_{h^{-1}}} \, ,   $$
for any two~$g, h\in \U(V)$. Using the formal properties of the Maslov
index, we may rewrite this 
$$ \maslov {\Gamma_1} {\Gamma_g} {\Gamma_{gh}} = \maslov {\Gamma_1}
{\Gamma_h} {\Gamma_{hg}} \, .   $$
Thus we see that the Meyer index of~$g$ and~$h$ is in fact equal to
that of~$h$ and~$g$. As a result, we see that~$f(\beta \gamma )$ is
symmetric in~$\beta, \gamma $, as we wanted.
\end{proof}

We are now in position to prove :

\begin{prop}
In the situation above, there exists a unique Meyer-additive
function~$f$ on~$B_n$ such that~$f(\sigma_i) =0$ for~$1\le i \le
n-1$. 
\end{prop}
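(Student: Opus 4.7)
If $f_1, f_2$ are two Meyer-additive functions both vanishing on every $\sigma_i$, then $f_1 - f_2$ is a group homomorphism $B_n \to \WH(K,\sigma)$. Since $B_n^{\mathrm{ab}}$ is infinite cyclic, generated by the common image of the $\sigma_i$'s (the braid relation forces all $\sigma_i$ to have the same class in $B_n^{\mathrm{ab}}$), the difference must vanish identically.

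\textbf{Existence.} The plan is to reformulate the problem as a splitting problem. Specifying a Meyer-additive $f$ with $f(\sigma_i) = 0$ is the same as giving a group-theoretic section $\tilde{\rho} \colon B_n \to \widetilde{\U(\D{V})}$ of the pulled-back central extension which sends each $\sigma_i$ to the canonical lift $(\rho(\sigma_i), 0)$. Since $B_n$ is presented by the generators $\sigma_i$ modulo the braid and commutation relations, such a section exists if and only if the elements $(\rho(\sigma_i), 0)$ satisfy those relations in $\widetilde{\U(\D{V})}$.

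The commutation relations for $|i-j| \geq 2$ reduce at once to $c(\sigma_i, \sigma_j) = c(\sigma_j, \sigma_i)$, which is a special case of the symmetry $c(g, h) = c(h, g)$ established in the proof of Proposition~\ref{prop-conj-inj}. The braid relation is more substantial. Spelling it out and cancelling using this symmetry, it becomes
\[
c(\sigma_i \sigma_{i+1}, \sigma_i) = c(\sigma_{i+1} \sigma_i, \sigma_{i+1}),
\]
and two applications of the cocycle identity on $c$ (one to each side, again using symmetry) reduce it to the single Maslov-index identity
\[
\tau(\Gamma_1, \Gamma_A, \Gamma_T) = \tau(\Gamma_1, \Gamma_B, \Gamma_T),
\]
where $A = r(\sigma_i)$, $B = r(\sigma_{i+1})$, and $T = ABA = BAB$.

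The main obstacle is this last identity. The trick is to exploit the element $T$ itself. A direct manipulation of $ABA = BAB$ gives $TAT^{-1} = B$ (and symmetrically $TBT^{-1} = A$), so conjugation by the ``half twist''~$T$ exchanges~$A$ and~$B$. The element $(T, T) \in \U(V) \times \U(V) \subset \U(\D{V})$ acts on graphs by $(T, T) \cdot \Gamma_g = \Gamma_{T g T^{-1}}$, and therefore fixes $\Gamma_1$ and $\Gamma_T$ while swapping $\Gamma_A$ and $\Gamma_B$; the unitary invariance~(iv) of the Maslov index, applied to~$\D{V}$, then yields the required equality.
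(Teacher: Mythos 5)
Your proof is correct, and its skeleton coincides with the paper's: uniqueness because the difference of two Meyer-additive functions is a homomorphism into an abelian group vanishing on the generators; existence by checking that the canonical lifts $(\rho(\sigma_i),0)$ satisfy the defining relations of $B_n$ in $\widetilde{\U(\D{V})}$ (the paper runs the same check through the free group $F_n$ and the relation subgroup $R$); and the commutation relations handled by the symmetry $c(g,h)=c(h,g)$ established in Proposition~\ref{prop-conj-inj}. Where you genuinely diverge is the treatment of the braid relation. The paper first conjugates by $\alpha=\sigma_1\cdots\sigma_{n-1}$ to reduce to $i=1$, then applies the cocycle identity to reach its equation (**), and finally performs two successive unitary moves ($r(\beta)\times r(\beta)$ with $\beta=\sigma_2^{-1}\sigma_1^{-1}$, followed by $1\times r(\sigma_1^{-1})$). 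You instead symmetrize both sides with the cocycle identity down to $\maslov{\Gamma_1}{\Gamma_A}{\Gamma_T}=\maslov{\Gamma_1}{\Gamma_B}{\Gamma_T}$ with $T=ABA=BAB$, and note that $TAT^{-1}=B$ and $TBT^{-1}=A$ follow at once from the braid relation, so the single unitary element $T\times T\in\U(\D{V})$ fixes $\Gamma_1$ and $\Gamma_T$ while exchanging $\Gamma_A$ and $\Gamma_B$; one application of invariance (iv) finishes the argument, uniformly in $i$. This is a cleaner route to the same identity: no reduction to $i=1$, one conjugation instead of three, and the half-twist $T$ makes the underlying symmetry visible. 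Both versions ultimately rest on the same two pillars, the symmetry of the Meyer cocycle and the unitary invariance of the Maslov index, so nothing is lost in generality.
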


We shall talk of the normalized Meyer-additive function associated to
the representation~$r$.

\begin{proof}
We prove the existence first. Let~$F_n$ denote the free group on~$n-1$
generators written~$\sigma_1, \ldots , \sigma_{n-1}$, so that there is
a projection map~$\pi \colon F_n \longrightarrow B_n$. We put~$R =
\ker \pi$. The two-cocycle above is certainly trivial when pulled-back
to~$F_n$, since the latter has no non-trivial central
extensions. Therefore there exists a Meyer-additive function~$\bar f
\colon F_n \to \WH(K)$; what is more, we may (and we do) impose~$\bar
f (\sigma_i )=0$. We prove now that~$\bar f(\beta )$ depends only on
the class of~$\beta \in F_n$ modulo~$R$, so that~$\bar f$ factors
through~$B_n$.

Since the representation~$r$ does factor through~$B_n$, the
two-cocycle~$c(\beta , \gamma )$ vanishes for~$\beta \in R$ and
any~$\gamma \in F_n$. It follows that~$\bar f (\beta \gamma ) = \bar
f(\beta ) + \bar f (\gamma )$ in this situation. Therefore, it
suffices to show that~$\bar f$ vanishes on~$R$. Note also that~$\bar f
(\beta \gamma ) = \bar f(\beta ) + \bar f (\gamma )$ whenever~$\beta
\gamma \in R$, for similar reasons.

The previous Proposition applies to~$\bar f$, and shows that~$\bar f$
is conjugation-invariant. Thus it is sufficient to show that~$\bar f$
vanishes on a set of generators for~$R$ {\em as a normal subgroup}. We
take for those the commutators~$[\sigma_i, \sigma_j]$ for~$| i -j| \ge
2$, and~$(\sigma_i \sigma_{i+1} \sigma_i)(\sigma_{i+1} \sigma_i
\sigma_{i+1})^{-1}$ for~$1 \le i < n-1$.

Meyer-additivity implies, as the reader will check, that~$\bar f(1) =
0$ and~$\bar f( \beta ^{-1}) = - \bar f(\beta)$. As a result~$\bar
f(\beta \gamma^{-1}) = \bar f (\beta ) - \bar f(\gamma )$
whenever~$\beta \gamma^{-1} \in R$. Therefore we have reduced the
proof to checking that~$\bar f(\sigma_i \sigma_j) = \bar f (\sigma_j
\sigma_i)$ and that~$\bar f(\sigma_i \sigma_{i+1} \sigma_i ) = \bar f(
\sigma_{i+1} \sigma_i \sigma_{i+1})$ for the relevant indices. However
$\bar f(\sigma_i \sigma_j) = \bar f (\sigma_j \sigma_i)$ trivially
holds for any pair~$i, j$ since we know that~$\bar f$ is
conjugation-invariant. 

Going back to the definitions, we see after a little calculation that
we need to prove that
\[ \m{1}{r(\sigma_i \sigma_{i+1})} {r(\sigma_i \sigma_{i+1} \sigma_i
  )} = \m{1}{r(\sigma_{i+1})} {r(\sigma_{i+1} \sigma_i \sigma_{i+1})} \, . \tag{*}  \]

In~$B_n$ there is an element~$\alpha $ such that~$\alpha \sigma_i
\alpha^{-1} = \sigma_{i+1}$ (for example~$\alpha = \sigma_1 \sigma_2
\cdots \sigma_{n-1}$). Moreover, for any~$\beta \in B_n$ we note
that~$r(\beta) \times r( \beta ) $ is an automorphism of~$\D{V}$ which
satisfies~$r(\beta) \times r( \beta ) \cdot \Gamma_{r(\gamma )} =
\Gamma_{r(\beta \gamma \beta^{-1})}$. By applying property (iv) of
Maslov indices with the automorphism~$r(\alpha )\times r (\alpha )$,
we see that we only need to prove (*) for~$ i = 1$, that is, we need
to show 
$$ \m{1}{r(\sigma_1 \sigma_{2})} {r(\sigma_1 \sigma_{2} \sigma_1
  )} = \m{1}{r(\sigma_{2})} {r(\sigma_{2} \sigma_1 \sigma_{2})} \, .  $$
Of course~$r(\sigma_1 \sigma_2 \sigma_1) = r(\sigma_2 \sigma_1
\sigma_2)$, so there are four lagrangians involved in this
equation. Appealing to the cocycle property (ii) of Maslov indices, we
obtain the equivalent equation 
\[ \m{1} {r(\sigma_1 \sigma_{2})} {r( \sigma_{2}
  )} = \m{r(\sigma_2)}  {r( \sigma_1 \sigma_{2} \sigma_1)} {r(
  \sigma_1 \sigma_{2})} \, .  \tag{**} \]
Finally consider the element~$\beta = \sigma_2^{-1}
\sigma_1^{-1}$. In~$B_n$, conjugation by~$\beta $ takes~$\sigma_2$
to~$\sigma_1$, it takes~$\sigma_1 \sigma_2 \sigma_1$ to~$\sigma_1^2
\sigma_2$, and~$\sigma_1 \sigma_2$ to itself. After applying~$r(\beta
)\times r(\beta )$ to the right hand side of (**), one
obtains thus~$\m{r(\sigma_1)} {r(\sigma_1^2 \sigma_2)}  {r(\sigma_1
  \sigma_2)}$. Now apply~$1 \times r(\sigma_1^{-1})$ and you get the
left hand side of (**). This concludes the proof of the existence
of~$f$. 

We turn to the uniqueness of Meyer-additive functions. Such a map is
clearly determined by its values on the generators~$\sigma_i$ of the
braid group. What is more, these generators are all conjugate, as we
have seen. Thus~$f$ is determined by, say, the value $f(\sigma_1)$,
and there can only be one Meyer-additive function vanishing
on~$\sigma_1$, which is stronger than the statement in the Proposition.
\end{proof}

\subsection{The Markov conditions}

As announced at the beginning of this section, one can hope to produce
a Markov function by using a sequence of representations~$r_n : \B_n
\to \U(V_n)$, and using for~$f_n$ the corresponding normalized,
Meyer-additive function, whose coboundary is the two-cocycle~$c_n$.
The chief example seems to be the Burau representation, to be
described next. Other (unsuccessful) attemps by the authors include
the Lawrence-Krammer-Bigelow representation, the representations
afforded by Hecke algebras, and those related to the modules of the
quantum group~$U_q(\mathfrak{sl}_2)$.

It is easy to write down the conditions for the functions~$f_n$ to
combine into a Markov function. Let us do this now in the special
case, covering the Burau representation, when
$$ V_{n+1} = V_n \oplus (triv)  $$
as~$\B_n$-modules, where~$(triv)$ refers to a
trivial~$\B_n$-module. The additivity of the Maslov index immediately
implies that~$c_{n+1}(\beta , \gamma ) = c_n(\beta , \gamma )$
for~$\beta, \gamma \in\B_n$ (here we see~$\B_n$ as a subgroup
of~$\B_{n+1}$, suppressing any inclusion map from the notation). It
follows that~$f_{n+1}$, when restricted to~$\B_n$, coincides
with~$f_n$.

The collection~$(f_n)$ is then a Markov function if and only if 
\begin{equation} \label{eq-markov-condition}
c_{n+1}( \beta , \sigma_n^\pm) = \maslov{\Gamma_1} {\Gamma_{r_{n+1}(\beta)} }
{\Gamma_{r_{n+1}(\beta \sigma_n^\pm)} }= 0 \, ,
\end{equation}
for~$\beta \in\B_n$. 

\begin{rmk} \label{rmk-cofinal}
It may (and it will) happen that we have at our disposal a collection
of representations~$r_{d_n} \colon \B_{d_n} \to \U(V_{d_n})$ for some
increasing sequence of integers~$d_1, d_2, \ldots $, but that~$r_n$ is
not initially defined for all~$n$. In this case, given an integer~$n$
we shall pick the smallest~$d_m$ such that~$n \le d_m$, and
define~$r_n$ to be the composition of the inclusion~$\B_n \to
\B_{d_m}$ followed by~$r_{d_m}$. 

(In practice this will happen with the Burau representation at~$t=-1$,
for which the anti-hermitian form is only non-degenerate
for~$\B_{2n}$; so for~$\B_{2n-1}$ we have to consider its inclusion
into~$\B_{2n}$.) 
\end{rmk}

\section{The case of the Burau representation} \label{sec-burau}

\subsection{Definitions}

Initially, the Burau representation is the homomorphism~$\B_n\to GL_n(\Z[t, t^{-1}])$
mapping~$\sigma_i$ to the matrix 
$$ \Sigma_i = \left(\begin{array}{cccc}
I_{i-1} & 0 & 0 & 0 \\
0 & 1 - t & 1 & 0 \\
0 & t & 0 & 0 \\
0 & 0 & 0 & I_{n-i-1}
\end{array}\right) \, . 
$$
(Some authors use the transpose of this matrix, for example in
\cite{kassel}.)

The ring~$\Z[t, t^{-1}]$ has an involution~$\sigma _0$
with~$\sigma_0(t)= t^{-1}$. As above we write~$\bar x$ instead
of~$\sigma_0 (x)$. Now put 
$$ \Omega_n = \left(\begin{array}{ccccc}
1 & 0 & 0 & \cdots & 0 \\
1 - t & 1 & 0 & \cdots & 0 \\
1-t & 1-t & 1 & \cdots & 0 \\
\vdots & \vdots & \vdots & \ddots & \vdots \\
1-t & 1-t & 1-t & \cdots & 1
\end{array}\right) \, . 
$$
For each~$i$, $1 \le i < n$, one has ${}^T \overline{\Sigma_i} \, \Omega_n\,
\Sigma_i = \Omega_n$, where~${}^T A$ denotes the transpose of the
matrix~$A$ (see \cite{kassel}, Theorem 3.1). 

Now let~$K$ be a field with involution~$\sigma $, and pick a
homomorphism~$\alpha : \Z[t, t^{-1}] \to K$ compatible with~$\sigma_0$
and~$\sigma $. Typical examples will be: (i) $K = \Q(t)$ or~$\F_p(t)$
with involution~$\sigma $ defined by~$\sigma (t)=t^{-1}$ and~$\alpha
(t) = t$, and (ii) $K=\Q$ or~$\R$ or~$\F_p$ with trivial involution
and~$\alpha (t) = -1$.

We let~$V_n= K^n$ and view it as a~$\B_n$-module by applying~$\alpha $
to the coefficients of the Burau representation. Likewise, we may
see~$\Omega_n$ as a matrix with coefficients in~$K$. Now we put
$$ H_n = \Omega_n -  \, {}^{T} \overline{ \Omega_n} \, ,   $$
so that~${}^T H_n = -\, \overline{ H_n}$, and
also~${}^T \overline{ \Sigma_i} \, H_n\, \Sigma_i = H_n$.

The space~$V_n$ is thus anti-hermitian when equipped with the
form~$h_n$ given by (identifying vectors of~$V_n$ with~$n\times 1$
matrices):
$$ h_n(x,y)= {}^T x\, H_n \, y \, .  $$
We are interested in cases when this form is non-degenerate. A simple
calculation leads to 
\begin{lem}
The determinant of~$H_n$ is given by
$$ \det H_n = (-1)^n \left[ (1-\alpha (t))^{n-1}   -   (\frac{1}
  {\alpha (t)} -1 )^{n-1} \right] \, .   $$
In particular, when~$\alpha (t) \ne 1$, at most one of~$\det H_n$
and~$\det H_{n+1}$ can be zero.
\end{lem}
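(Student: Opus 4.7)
The plan is a direct computation, and the main obstacle is finding a presentation that avoids the ``everywhere non-zero'' nature of the entries of $H_n$. Setting $s := 1-\alpha(t)$ and $\bar s := 1-\alpha(t)^{-1}$, one records the trivial identity $s\bar s = s + \bar s$, itself just the expansion of $(1-t)(1-t^{-1})$. The matrix $H_n = \Omega_n - {}^T\overline{\Omega_n}$ has diagonal $0$, below-diagonal entries $s$, and above-diagonal entries $-\bar s$. Using the identity, one checks the factorisation
$$H_n = \bar s\,(\Omega_n - J_n),$$
where $J_n$ denotes the $n\times n$ all-ones matrix: indeed the below-diagonal entries of the right-hand side are $\bar s(s-1) = s\bar s - \bar s$, which equals $s$ by the above, while the remaining entries match by inspection. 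Spotting this factorisation is the only non-routine step.

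With this in hand, $J_n = \mathbf{1}\mathbf{1}^T$ is of rank one, so the matrix determinant lemma applies. Since $\Omega_n$ is unit lower triangular,
$$\det(\Omega_n - J_n) = \det(\Omega_n)\bigl(1 - \mathbf{1}^T \Omega_n^{-1}\mathbf{1}\bigr) = 1 - \mathbf{1}^T \Omega_n^{-1} \mathbf{1}.$$
The quantity $\mathbf{1}^T\Omega_n^{-1}\mathbf{1}$ is the sum of the coordinates of the solution $y$ to the lower-triangular system $\Omega_n y = \mathbf{1}$. Writing $Y_i := y_1 + \dots + y_i$, one reads off the recursion $Y_i = (1-s) Y_{i-1} + 1$ with $Y_0 = 0$, a standard geometric series whose closed form is $Y_n = (1-\alpha(t)^n)/s$, using $1-s = \alpha(t)$.

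Plugging back in yields $\det H_n = \bar s^n(\alpha(t)^n - \alpha(t))/s$, and a final rewrite using $\bar s = -s/\alpha(t)$ brings this into the stated form
$$\det H_n = (-1)^n\bigl[(1-\alpha(t))^{n-1} - (\alpha(t)^{-1} - 1)^{n-1}\bigr].$$
For the last assertion, the further factorisation $\det H_n = (-1)^n (1-\alpha(t))^{n-1}\bigl(1 - \alpha(t)^{1-n}\bigr)$ shows that, under the hypothesis $\alpha(t)\neq 1$, the vanishing of $\det H_n$ is equivalent to $\alpha(t)^{n-1} = 1$; were $\det H_n$ and $\det H_{n+1}$ both zero, then $\alpha(t)^{n-1}=\alpha(t)^n=1$ would force $\alpha(t) = 1$, contradicting the hypothesis.
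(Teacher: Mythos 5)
Your proof is correct, and every step checks out: the factorisation $H_n=\bar s\,(\Omega_n-J_n)$ is valid (diagonal $0$, below-diagonal $\bar s(s-1)=s\bar s-\bar s=s$, above-diagonal $-\bar s$), the matrix determinant lemma applies since $J_n$ has rank one and $\det\Omega_n=1$, the recursion $Y_i=\alpha(t)Y_{i-1}+1$ gives $Y_n=(1-\alpha(t)^n)/s$, and the final simplification via $\bar s=-s/\alpha(t)$ does yield $(-1)^n(1-\alpha(t))^{n-1}(1-\alpha(t)^{1-n})$, which equals the stated expression. The paper offers no argument here --- it dismisses the lemma as ``a simple calculation'' --- so there is nothing to compare against; your rank-one-perturbation route is a clean and complete way to fill that gap, and your deduction of the final assertion from $\alpha(t)^{n-1}=\alpha(t)^n=1\Rightarrow\alpha(t)=1$ is exactly right.
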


From now on we assume that~$\alpha (t)\ne 1 $ so that this determinant
is non-zero for infinitely many values of~$n$. 

The form~$h_n$ is preserved by~$\B_n$, so we end up with a map~$r_n :
\B_n \to \U(V_n)$, as requested in the previous section (and bearing
Remark~\ref{rmk-cofinal} in mind). Thus we have maps
$$ f_n : \B_n \longrightarrow \WH(K, \sigma  ) \, ,   $$
and we shall prove presently that together they give a Markov
function, and thus a link invariant.

Our first step is to prove the existence of an auxiliary Markov
function:

\begin{prop}\label{prop-petit-markov}
For each~$n \ge 2$ and each~$\beta \in \B_n$, put 
$$ d_n^K(\beta ) = d_n (\beta )= \dim_K \ker( r_n(\beta ) - Id_n) \, .   $$
Then~$(d_n)_{n\ge 2}$ is a Markov function.
\end{prop}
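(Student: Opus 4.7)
The plan is to verify directly the two defining properties (i) and (ii) of a Markov function recalled in \S2.1.

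Condition (i), conjugation invariance, is essentially automatic: since $r_n$ is a group homomorphism, $r_n(\gamma^{-1}\beta \gamma) - Id_n = r_n(\gamma)^{-1}(r_n(\beta) - Id_n) r_n(\gamma)$, so its kernel is carried by $r_n(\gamma)$ to $\ker(r_n(\beta) - Id_n)$, giving equality of dimensions.

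Condition (ii) is where the work lies. The first step is to exploit the block structure of the Burau matrices: because each $\Sigma_i$ acts non-trivially only on the $i$-th and $(i{+}1)$-th coordinates, the inclusion $\iota_n$ gives $r_{n+1}(\iota_n(\beta)) = r_n(\beta) \oplus 1$ on $V_{n+1} = V_n \oplus K$, so that the last coordinate is fixed. Writing $r_n(\beta)$ in $(n{-}1,1)$-block form as $\bigl(\begin{smallmatrix} A & b \\ c & d \end{smallmatrix}\bigr)$, one then computes $r_{n+1}(\iota_n(\beta)\sigma_n)$ explicitly as a $3{\times}3$ block matrix and considers the linear system defining $\ker(r_{n+1}(\iota_n(\beta)\sigma_n) - Id_{n+1})$ for a vector $(x,y,z)$ with $x \in K^{n-1}$ and $y,z \in K$. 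The bottom row of the system reads $ty - z = 0$, i.e.\ $z = ty$; substituting this back into the first two block-rows causes the coefficients of $y$ to collapse and leaves precisely the system $(r_n(\beta) - Id_n)\bigl(\begin{smallmatrix} x \\ y \end{smallmatrix}\bigr) = 0$. This sets up a $K$-linear bijection between the two kernels, proving $d_{n+1}(\iota_n(\beta)\sigma_n) = d_n(\beta)$.

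For $\sigma_n^{-1}$, the computation is strictly analogous once one writes down $\Sigma_n^{-1}$ (which makes sense because $\alpha(t)\neq 0$): the bottom row now expresses $y$ in terms of $z$, and after the change of variable $w = z/t$ the remaining rows again reduce to the system $(r_n(\beta) - Id_n)\bigl(\begin{smallmatrix} x \\ w \end{smallmatrix}\bigr) = 0$. Thus the only real obstacle is bookkeeping: keeping the block multiplication straight and performing the right substitution to recognize the reduced system. There is no deeper conceptual difficulty, and the result follows by Markov's theorem.
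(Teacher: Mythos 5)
Your proof is correct and follows essentially the same route as the paper's: both come down to an elementary block computation on $r_{n+1}(\iota_n(\beta))\,\Sigma_n^{\pm 1} - Id_{n+1}$ exploiting that the last coordinate is fixed by $\iota_n(\beta)$. The paper packages this as a single column operation exhibiting a block-triangular form (stated for an arbitrary matrix $M$ in place of $r_n(\beta)$, after passing to the transpose of $\Sigma_n$) so that the rank rises by exactly one, whereas you solve the linear system for the kernel directly via the substitution $z = ty$ (resp.\ $y = z/t$); the two computations are equivalent.
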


Note that the hermitian structure does not come into play here. Also,
the characteristic of~$K$ may very well be~$2$ in this Proposition.

\begin{proof}
  That~$d_n$ is conjugation-invariant is obvious. We claim that,
  for any~$n\times n$ matrix~$M$, the rank of~$\widetilde M
  \Sigma_n^\pm - Id_{n+1}$ is one more than the rank of~$M - Id_n$,
  where~$\widetilde M$ is the~$(n+1)\times(n+1)$ matrix obtained from~$M$
  by adding a~$1$ in the bottom right corner. This will imply the
  result.

  It is enough to prove the claim with~$\Sigma_n$ replaced by its
  transpose~${}^T \Sigma_n$. We observe that, adding the last column
  of~$\widetilde M {}^T\Sigma_n - Id_{n+1}$ to the one immediately on its
  left, we obtain 
$$ \left(\begin{array}{cc}
M - Id_n & * \\
0            & -1
\end{array}\right) \, . 
$$
This takes care of~$\Sigma_n$. The same operation on~$\widetilde M
  {}^T\Sigma_n^{-1} - Id_{n+1}$ gives
$$ \left(\begin{array}{cc}
M - Id_n & * \\
0            & \frac{-1} {t}
\end{array}\right) \, . \qedhere
$$
\end{proof}

\subsection{The main theorem}

Before we turn to the proof of the main result, we quote Thomas's
criterion for the vanishing of Maslov indices:

\begin{lem}[Thomas] \label{lem-thomas}
Let~$\ell_1$, $\ell_2$ and~$\ell_3$ be lagrangians in the anti-hermitian space~$U$. If 
$$ \dim(\ell_1\cap\ell_2) + \dim(\ell_2\cap\ell_3) +
\dim(\ell_3\cap\ell_1) =  \dim \ell_1 + 2 \dim(\ell_1 \cap
\ell_2 \cap \ell_3) \, , 
$$
then 
$$ \maslov {\ell_1} {\ell_2} {\ell_3} = 0 \, .   $$
\end{lem}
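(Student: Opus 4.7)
My plan is to show that the hermitian form $H$ whose class defines $\maslov{\ell_1}{\ell_2}{\ell_3}$ is hyperbolic once its radical is divided out; since a non-degenerate hermitian form is zero in $\WH(K,\sigma)$ precisely when it admits a lagrangian, it will suffice to produce an isotropic subspace of $\ell_1\oplus\ell_2\oplus\ell_3$ whose image in the non-degenerate quotient by $\ker H$ is a lagrangian. I write $\ell_{ij}=\ell_i\cap\ell_j$, $a_{ij}=\dim\ell_{ij}$, $a=\dim(\ell_1\cap\ell_2\cap\ell_3)$ and $n=\dim\ell_i$.

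First I would construct the explicit candidate
$$U = \{(x+y,\; z-x,\; -y-z) : x\in\ell_{12},\ y\in\ell_{13},\ z\in\ell_{23}\}\subset \ell_1\oplus\ell_2\oplus\ell_3.$$
Each coordinate visibly lies in the correct lagrangian. Expanding $H$ on two such triples yields a sum of terms $h(\cdot,\cdot)$ whose two arguments always lie in a common $\ell_i$, so every term vanishes and $U$ is isotropic. The parameterization $(x,y,z)\mapsto(x+y,z-x,-y-z)$ has kernel $\{(d,-d,d):d\in\ell_1\cap\ell_2\cap\ell_3\}$, hence $\dim U = a_{12}+a_{13}+a_{23}-a$, which under the hypothesis is $n+a$.

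The main obstacle is controlling the radical, which unwinds to
$$\ker H = \{(v_1,v_2,v_3)\in\ell_1\oplus\ell_2\oplus\ell_3 :\, v_1-v_2\in\ell_3,\ v_2-v_3\in\ell_1,\ v_3-v_1\in\ell_2\},$$
and I claim $\dim\ker H = n+2a$ under the hypothesis. The key auxiliary fact is that $F := \ell_{12}+\ell_{13}+\ell_{23}$ is an isotropic subspace of $V$ of dimension $a_{12}+a_{13}+a_{23}-2a$, which the hypothesis forces to be exactly $n$; so $F$ is a full lagrangian with $F^\perp=F$. Given any $(v_1,v_2,v_3)\in\ker H$, pairing $v_i$ against an arbitrary element of $F$ split along its three summands shows $h(v_i,F)=0$, hence $v_i\in F\cap\ell_i$. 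Decomposing each $v_i$ accordingly inside $F\cap\ell_i$ and rewriting the three boundary conditions as linear conditions on the resulting parameters, a direct bookkeeping yields the announced dimension.

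Finally, the same kind of inspection shows $\dim(U\cap\ker H)=2a$: the three boundary conditions force $x,y,z\in\ell_1\cap\ell_2\cap\ell_3$. So the image of $U$ in $(\ell_1\oplus\ell_2\oplus\ell_3)/\ker H$ has dimension $(n+a)-2a=n-a$, while the quotient itself has dimension $3n-(n+2a)=2(n-a)$; therefore this image is a lagrangian, $H$ is hyperbolic modulo its radical, and $\maslov{\ell_1}{\ell_2}{\ell_3}=0$ in $\WH(K,\sigma)$. The whole weight of the argument rests on pinning down $\dim\ker H$, for which the hypothesis is used to promote the always-isotropic $F$ to a full lagrangian of $V$.
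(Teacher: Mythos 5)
Your argument is correct, and it is genuinely self-contained where the paper is not: the paper offers no proof of this lemma at all, simply referring to Thomas's Proposition 4.1, so your proposal cannot coincide with ``the paper's proof'' and instead supplies an elementary replacement for the citation. I checked the key points. The subspace $F=\ell_{12}+\ell_{13}+\ell_{23}$ is indeed isotropic (each pair of summands lies in a common $\ell_i$), and its dimension really is $a_{12}+a_{13}+a_{23}-2a$ because $(\ell_{12}+\ell_{13})\cap\ell_{23}$ is forced into the triple intersection; the hypothesis then promotes $F$ to a lagrangian, which is exactly where it enters. Your identification of $\ker H$ via $v_1-v_2\in\ell_3$, $v_2-v_3\in\ell_1$, $v_3-v_1\in\ell_2$ is right, and the trick $h(v_1,z)=h(v_2,z)+h(v_1-v_2,z)$ for $z\in\ell_{23}$ does show $v_i\in F^{\perp}\cap\ell_i=F\cap\ell_i$. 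The ``direct bookkeeping'' you leave implicit does work out: $\sum\dim(F\cap\ell_i)=2n+a$ and the three congruence conditions impose exactly $(a_{12}-a)+(a_{13}-a)+(a_{23}-a)=n-a$ independent conditions, giving $\dim\ker H=n+2a$; in a polished write-up you should display this count rather than assert it, since it carries, as you say, the whole weight of the argument. The computations $\dim U=n+a$, $\dim(U\cap\ker H)=2a$ (using $\mathrm{char}\,K\neq 2$ to get $2x\in\ell_3\Rightarrow x\in\ell_{123}$), and the conclusion that an isotropic subspace of half the dimension of the non-degenerate quotient forces hyperbolicity, hence vanishing in $\WH(K,\sigma)$, are all sound. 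Compared with invoking Thomas, your route buys a transparent, purely linear-algebraic verification tailored to the paper's definition of $\maslov{\ell_1}{\ell_2}{\ell_3}$ as a form on $\ell_1\oplus\ell_2\oplus\ell_3$ modulo its radical, at the cost of not identifying the non-degenerate representative in general (Thomas's result gives the dimension of that representative for arbitrary triples, of which the vanishing criterion is the special case used here).
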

(Of course~$\dim \ell_1 = \dim \ell_2 = \dim \ell_3 = \frac{1} {2}
\dim U$, so the equality is symmetric in~$\ell_1$, $\ell_2$,
$\ell_3$.) For a proof see \cite{thomas}, Proposition 4.1. We may now state 

\begin{thm} \label{thm-main}
For each~$n\ge 2$, let~$r_n : \B_n \to \U(V_n)$ be the homomorphism
obtained from the Burau representation as above. 

There is a unique map
$$ f_n : \B_n \longrightarrow \WH( K, \sigma  )  $$
with the property that~$f_n(\sigma_i)= 0$ and 
$$ f_n(\beta \gamma ) = f_n(\beta ) + f_n(\gamma ) + \maslov{\Gamma_1}
{\Gamma_{r_n(\beta )} } {\Gamma_{r_n(\beta \gamma )}} \, ,   $$
for~$\beta, \gamma \in \B_n$.

The collection~$(f_n)_{n\ge 2}$ is a Markov function.
\end{thm}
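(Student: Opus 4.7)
The existence and uniqueness of $f_n$ is the content of the Proposition on normalized Meyer-additive functions in~\S\ref{subsec-conj-inv}, applied to the representation $r_n \colon \B_n \to \U(V_n)$; so the first assertion is immediate. Conjugation invariance of $f_n$ -- property~(i) of a Markov function -- then follows from Proposition~\ref{prop-conj-inj}. For $\beta \in \B_n \subset \B_{n+1}$ the matrix $r_{n+1}(\beta)$ is block-diagonal with blocks $r_n(\beta)$ and $1$, so $V_{n+1} = V_n \oplus (triv)$ as $\B_n$-modules, and the discussion preceding~\eqref{eq-markov-condition} shows that $f_{n+1}$ restricts to $f_n$ on $\B_n$. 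Property~(ii) thus reduces to proving
\[ \m{1}{r_{n+1}(\beta)}{r_{n+1}(\beta\sigma_n^{\pm 1})} = 0 \]
for every $\beta \in \B_n$ (viewed inside $\B_{n+1}$).

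The plan is to deduce this vanishing from Thomas's criterion (Lemma~\ref{lem-thomas}), applied to the lagrangians $\ell_1 = \Gamma_1$, $\ell_2 = \Gamma_{r_{n+1}(\beta)}$, $\ell_3 = \Gamma_{r_{n+1}(\beta\sigma_n^{\pm 1})}$ in $\D{V_{n+1}}$. Using the elementary identity $\dim(\Gamma_g \cap \Gamma_h) = \dim\ker(g-h)$, the three pairwise intersections have dimensions $d_{n+1}(\beta) = d_n(\beta)+1$, $d_{n+1}(\beta\sigma_n^{\pm 1}) = d_n(\beta)$, and $\dim\ker(r_{n+1}(\sigma_n^{\pm 1}) - Id) = n$, respectively: the first by direct inspection, the second by Proposition~\ref{prop-petit-markov}, and the third by a short calculation on the $2\times 2$ non-trivial block of $\Sigma_n^{\pm 1}$, whose kernel is spanned by $e_n + \alpha(t)\, e_{n+1}$.

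It remains to identify the triple intersection. An element of $\ell_1\cap\ell_2\cap\ell_3$ corresponds to a vector $v \in V_{n+1}$ simultaneously fixed by $r_{n+1}(\beta)$ and $r_{n+1}(\sigma_n^{\pm 1})$. Intersecting the explicit fixed spaces $\ker(r_n(\beta)-Id) \oplus K\cdot e_{n+1}$ and $\mathrm{span}(e_1,\ldots,e_{n-1}, e_n+\alpha(t)e_{n+1})$ one reads off a copy of $\ker(r_n(\beta)-Id)$, of dimension $d_n(\beta)$. The sum of pairwise intersection dimensions therefore equals $2 d_n(\beta) + n + 1 = (n+1) + 2 d_n(\beta)$, which is precisely the numerical hypothesis of Lemma~\ref{lem-thomas}; the Maslov index vanishes, and $(f_n)_{n \ge 2}$ is a Markov function.

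The main obstacle -- essentially the only non-formal step -- is identifying the triple intersection correctly. This hinges on the favorable fact that the fixed line of $\Sigma_n^{\pm 1}$ straddles the last two coordinates, so that intersecting it with $\ker(r_{n+1}(\beta) - Id)$ (which contains the entire $e_{n+1}$-axis, since $r_{n+1}(\beta)$ fixes $e_{n+1}$ for $\beta \in \B_n$) collapses it to a copy of $\ker(r_n(\beta) - Id)$. All other steps are bookkeeping driven by Proposition~\ref{prop-petit-markov}.
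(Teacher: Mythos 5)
Your proof is correct and follows essentially the same route as the paper's: reduce to the vanishing of $\maslov{\Gamma_1}{\Gamma_{r_{n+1}(\beta)}}{\Gamma_{r_{n+1}(\beta\sigma_n^{\pm})}}$, apply Thomas's criterion (Lemma~\ref{lem-thomas}) to the three graphs, compute the pairwise intersections as $d_n(\beta)+1$, $d_{n+1}(\beta\sigma_n^{\pm})$ and $n$, identify the triple intersection with $\ker(r_n(\beta)-Id)$, and close the count with Proposition~\ref{prop-petit-markov}. The one point you gloss over is that, per Remark~\ref{rmk-cofinal}, $r_{n+1}$ may actually be defined through an inclusion $\B_{n+1}\to\B_N$ when $H_{n+1}$ is degenerate, so the lagrangians a priori live in $\D{V_N}$ rather than $\D{V_{n+1}}$; the paper disposes of this by observing that the extra trivial summand adds the same quantity to every term of Thomas's equality, so your dimension count is unaffected.
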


Given a link~$L$ which is isotopic to the closure~$\hat \beta $ of the
braid~$\beta \in \B_n$, the invariant~$f_n(\beta )$ will be
written~$\Theta_K (L)$ (the dependence on~ $\sigma $ and $\alpha $
being implicit).

\begin{proof}
At this stage, it remains to prove 
$$ \maslov{\Gamma_1} {\Gamma_{r_{n+1}(\beta)} }
{\Gamma_{r_{n+1}(\beta \sigma_n^\pm)} }= 0  $$
  for all~$\beta \in \B_n$, see eq.\ \eqref{eq-markov-condition}.  

  We use Thomas's criterion (Lemma \ref{lem-thomas}). Note that, as
  explained in Remark~\ref{rmk-cofinal}, the representation~$r_{n+1}$
  is the composition of an inclusion~$\B_{n+1} \to \B_N$ for a
  certain~$N$, followed by the Burau representation of~$\B_N$. However
  for the sake of applying Thomas's lemma, all we need is to prove an
  equality involving the dimensions of certain subspaces made up from
$$ \ell_1 = \Gamma_1, \quad \ell_2 = \Gamma_{r_{n+1}(\beta)} , \quad
\ell_3 = \Gamma_{r_{n+1}(\beta \sigma_n^\pm)} \, .  $$
For this, we can and we will assume that~$N=n+1$ and so that~$r_{n+1}$
is really the Burau representation of~$\B_{n+1}$ : indeed going into a
larger braid group only adds a common summand to the subspaces in
sight, which does not affect the equality of dimensions to be
proved. So we work with the space~$\D{V _{n+1}}$ of dimension~$2n+2$.

  Let us identify the terms in the lemma. First the
  intersection~$\Gamma_1 \cap \Gamma_{r_{n+1}(\beta) }$ is isomorphic, as a
  vector space, with~$\ker(r_{n+1}(\beta) - Id)$. So its dimension
  is~$d_{n+1}(\beta )$, with the terminology of Proposition
  \ref{prop-petit-markov}. In turn, $d_{n+1} (\beta ) = 1 + d_n(\beta
  )$, clearly. 

  Proceeding in a similar fashion with the other terms, we see that
  the equality to check is really
\begin{multline*}
1 + d_n( \beta ) + d_{n+1}( \beta \sigma_n^\pm ) +
d_{n+1}(\sigma_n^\pm) = \\ n+1
+ 2 \dim (\ker(r_{n+1}(\beta) - Id) \cap \ker(r_{n+1}(\sigma_n^\pm) - Id)) \, . 
\end{multline*}
For simplicity let us now write~$\sigma_{n}^\pm$
for~$r_{n+1}(\sigma_n^\pm)$.  It is easy to
describe~$\ker(\sigma_n^\pm - Id)$. Indeed, let~$v_n \in V_n$ be the
vector with coordinates~$(1, t, t^2, \ldots, t^{n-1})$; it is readily
checked that~$\gamma v_n = v_n$ for all~$\gamma \in\B_n$. We see
that~$$\ker(\sigma_n^\pm -Id)= V_{n-1} \oplus K\, v_{n+1} \, ;$$ in
particular it has dimension~$n$.

What is more, we see that the dimension of~$\ker(r_{n+1}(\beta ) - Id)
\cap \ker(\sigma_n^\pm -Id)$ is one more than the dimension
of~$\ker(r_{n+1}(\beta ) - Id) \cap V_{n-1}$. Comparing with the
decomposition~$V_n = V_{n-1} \oplus K\, v_n$, we conclude that 
$$ \dim \ker(r_{n+1}(\beta ) - Id)
\cap \ker(\sigma_n^\pm -Id) = \dim \ker(r_n(\beta ) - Id) = d_n(\beta
) \, .  $$

Putting together these elementary observations, we find that Thomas's
criterion reduces to 
$$ d_n(\beta ) + d_{n+1}(\beta \sigma_n^\pm ) = 2 d_n(\beta ) \, ,   $$
which is guaranteed by Proposition \ref{prop-petit-markov}. This concludes
the proof.
\end{proof}

\section{Examples \& Computations} \label{sec-examples}

\subsection{Signatures} \label{subsec-signatures}

Let us start with the example of~$K=\R$, with trivial involution,
and~$\alpha  (t) = -1$. The above procedure yields a link invariant
with values in~$W(\R) \cong \Z$ (the isomorphism being given by the
signature of quadratic forms).

However, Gambaudo and Ghys have proved in \cite{ghys} that the
invariant which is classically called {\em the signature of a link} is
in fact given by a normalized, Meyer-additive Markov function (in our
terminology). As a result, ~$\Theta_\R(L)$ must always coincide with
the signature of~$L$.

An obvious refinement is obtained by taking~$K= \Q$ (and still~$\alpha
(t)=-1$). The invariant~$\Theta_\Q(L)$ lives in~$W(\Q)$. Recall
from~\cite{milnor} that there is an exact sequence
$$ 0 \longrightarrow W(\Z) \longrightarrow W(\Q) \longrightarrow
\bigoplus_{p} W(\F_p) \longrightarrow 0 \, .   $$
This sequence is split by the homomorphism~$W(\Q) \to W(\R) \cong
W(\Z)$. Moreover, for~$p$ odd the group $W(\F_p)$ is
either~$\Z/2\times \Z/2$ or~$\Z/4$ according as~$p$ is~$1$ mod~$4$ or
not, while~$W(\F_2) = \Z/2$. We obtain the invariants alluded to in
the introduction.


We need not restrict ourselves to the case~$\alpha (t)= -1$,
however. For example we may take~$K= \C$ with the usual complex
conjugation, and~$\alpha (t)= \omega$, a complex number of
module~$1$. All of the above generalizes. We obtain a link invariant
with values in~$\WH(\C)\cong \Z$, whose value on~$L$ will be
written~$\Theta_\omega(L)$.

When~$\omega$ is a root of unity at least, Gambaudo and Ghys also
prove in {\em loc.\ cit.} that the so-called {\em Levine-Tristram
  signature} of a link is given by a normalized, Meyer-additive
function, so that it must agree with~$\Theta_\omega(L)$. Again, we
obtain a refinement. Whenever~$\omega$ is algebraic, the field~$K=
\Q(\omega)$ is a number field. There is an exact sequence
\[ 0 \longrightarrow W(\mathcal{O}) \longrightarrow W( K )
\longrightarrow  \bigoplus_\mathfrak{p} W(\mathcal{O} /
\mathfrak{p}) \longrightarrow 0 \, ,  
  \]
  where~$\mathcal{O}$ is the ring of integers in~$K$, and the direct
  sum runs over the prime ideals~$\mathfrak{p}$. Thus the
  invariant~$\Theta_K(L) \in \WH(K) \subset W(K)$ yields invariants in
  the Witt rings of the various fields~$\mathcal{O}/\mathfrak{p}$,
  which are finite.

\subsection{The case~$K= \Q(t)$}
Our favorite example is that of~$K= \Q(t)$ with~$\sigma (t)= t^{-1}$
and~$\alpha (t)= t$; in some sense we shall be able recover the
signatures of the previous examples from this one. We shall go into
more computational considerations than above. The reader who wants to
know more about the technical details should consult the accompanying
{\sc Sage} script, available on the authors' webpages. Conversely,
this section is a prerequisite for understanding the code.

Consider~$\beta = \sigma_1^3 \in \B_2$ as a motivational
example. Here~$L= \hat \beta $ is the familiar trefoil knot. 

\medskip

\begin{minipage}{0.07\textwidth}
\includegraphics[width= \textwidth]{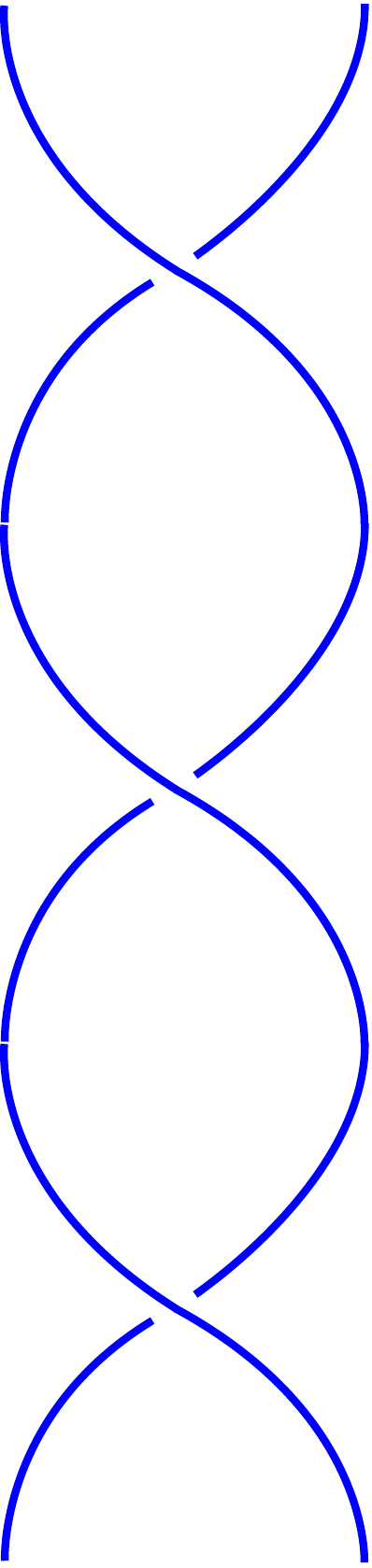}
\end{minipage} \hspace{3em} \begin{minipage}{0.35\textwidth}
\footnotesize

Left: the braid~$\sigma_1^3$.

 Right: its closure, the trefoil knot.
\end{minipage} \hspace{3em} \begin{minipage}{0.3\textwidth}
\includegraphics[width= \textwidth]{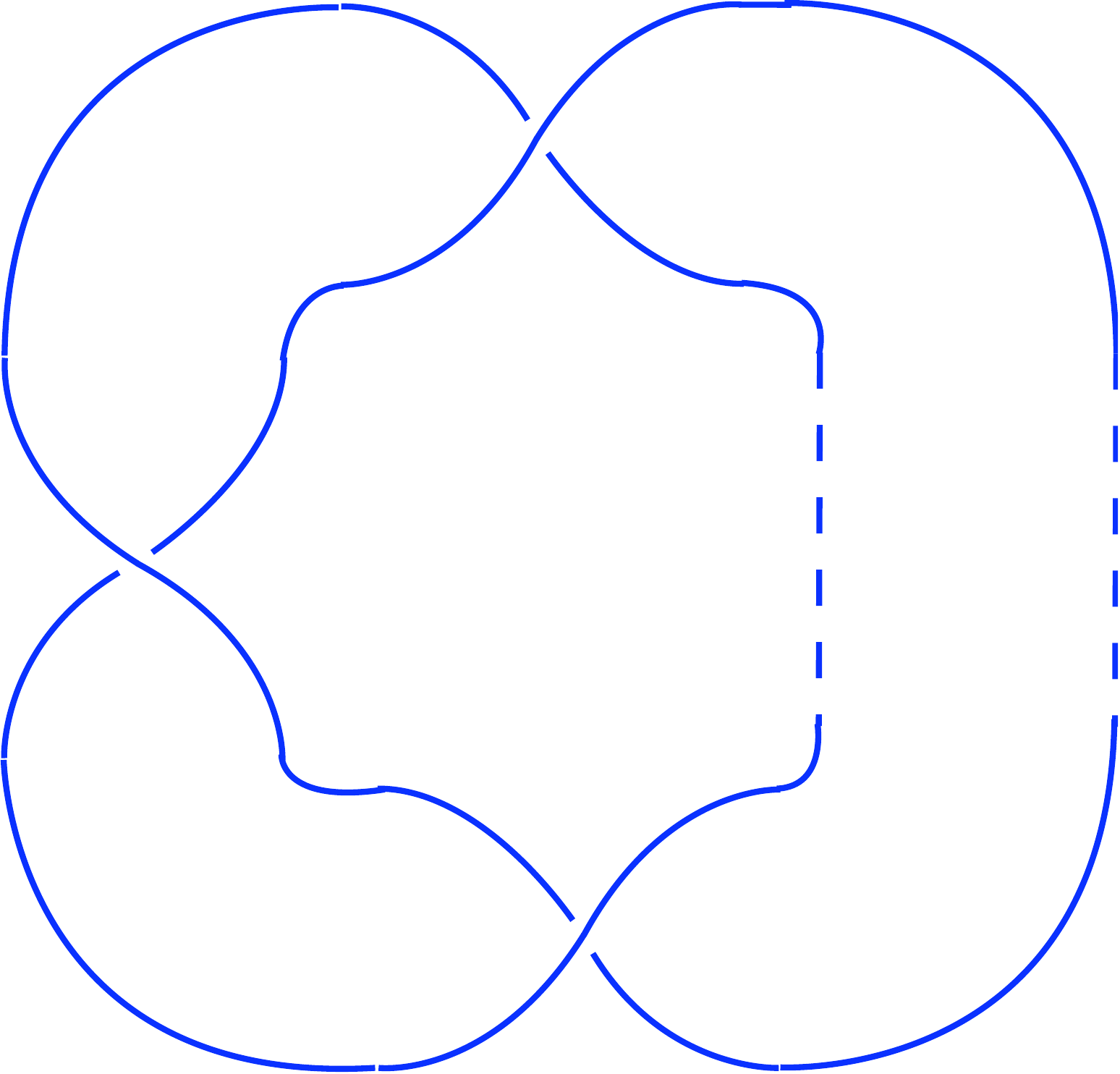}
\end{minipage}~

\medskip

When computing~$\Theta_{\Q(t)}(L)$ we are led to perform additions
in~$\WH(\Q(t), \sigma )$. Since a hermitian form can always be
diagonalized, we can represent any element in the hermitian Witt ring
by a sequence of scalars. In turn, these are in fact viewed
in~$k^\times / N(K^\times)$, where as above~$k= K^\sigma$ and~$N: K\to
k$ is the norm map~$x \mapsto x \bar x$. Summing two elements amounts
to concatenating the diagonal entries.

Let us turn to the example of the trefoil knot. We relax the notation,
and write~$f$ for~$f_n$ when~$n$ is obvious or irrelevant, and we
write~$c$ for the two-cocycle~$c(\beta , \gamma )= \maslov{\Gamma_1}
{\Gamma_{r(\beta )}} {\Gamma_{r(\beta \gamma )}}$, so we have the
formula~$f(\beta \gamma ) = f(\beta ) + f(\gamma ) + c(\beta ,
\gamma)$. Now:
\begin{align*}
  \Theta_{\Q(t)}(L) = f(\sigma_1^3) & = f(\sigma_1) + f(\sigma_1^2) +
  c(\sigma_1, \sigma_1^2) \\
  & =f(\sigma_1) + \left( f(\sigma_1) + f(\sigma_1) + c(\sigma_1,
    \sigma_1)\right) + c(\sigma_1,
  \sigma_1^2) \\
  & = 0 + 0 + 0 + c(\sigma_1, \sigma_1) + c(\sigma_1, \sigma_1^2) \,
  . 
\end{align*}
Thus~$\Theta_{\Q(t)}(L)$ is the sum of two Maslov indices, and direct
computation shows that it is represented by
$$ \left[1, -1, \frac{-2t^{2} + 2t - 2}{t}, 1, -1, 2\right] \, .  $$
Now, the hermitian form given by the matrix 
$$ \left(\begin{array}{rr}
-1 & 0 \\
0 & 1
\end{array}\right)
$$
is hyperbolic and so represents the trivial element in the Witt ring. We
conclude that~$\Theta_{\Q(t)}(L)$ is represented by the form whose
matrix is 
$$ \left(\begin{array}{rr}
\frac{-2t^{2} + 2t - 2}{t} & 0 \\
0       & 2
\end{array}\right) \, . 
$$

Comparing elements in the Witt ring can be tricky. For example, we
need to be able to tell quickly whether this last form is actually~$0$
or not. In general, link invariants need to be easy to compute and
compare. 

To this end, we turn to the construction of a Laurent polynomial
invariant. There is a well-known homomorphism~$D : \WH(K, \sigma )\to
k^\times / N(K^\times)$ given by the {\em signed determinant} : given
a non-singular, hermitian~$n \times n$-matrix~$A$ representing an
element in the Witt ring, then~$D(A)= (-1)^{\frac{n(n-1)} {2}} \det(A)$. This
defines a link invariant with values in $k^\times / N(K^\times)$, and
for the trefoil we have
$$ D( \Theta_{\Q(t)}(L)) = \frac{-t^{2} + t -1 }{t} \, .   $$
(Note how we got rid of the factor~$4= N(2)$.) This happens to be the
Alexander-Conway polynomial of~$L$; see the next section for more on
this. 

For the sake of practicalities, let us indulge in some
computational details:

\begin{lem}
Any element in~$k^\times / N(K^\times)$ can be represented by a
fraction of the form 
$$ \frac{D(t)} {t^d} \, , 
$$
where~$D(t)$ is a polynomial in~$t$, of degree~$2d$, not divisible
by~$t$, and which is also palindromic.

What is more, if~$D$ has minimal degree among such polynomials,
then it is uniquely defined up to a square in~$\Q^\times$.
\end{lem}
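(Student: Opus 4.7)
The plan is to pass to the variable $s = t + t^{-1}$, which identifies $k = K^\sigma$ with $\Q(s)$ and realises $K$ as the quadratic extension $k(\sqrt{s^2-4})$, with $\sqrt{s^2-4} = t - t^{-1}$ and norm $N(a+b\sqrt{s^2-4}) = a^2 - (s^2-4)b^2$. The assignment $R(s)\mapsto D(t):=t^d R(t+t^{-1})$ is a bijection between polynomials $R\in\Q[s]$ of degree $d$ and palindromic $D\in\Q[t]$ of degree $2d$ not divisible by~$t$. The lemma thus reduces to two statements about $k^\times/N(K^\times)$: (i) every class has a representative in $\Q[s]$, and (ii) the minimal-degree such representative is unique up to a square in $\Q^\times$. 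Claim~(i) is immediate: for $\phi = P/Q$ with $P, Q\in\Q[s]$, the identity $N(Q) = Q^2$ (valid since $Q$ is $\sigma$-fixed) shows $\phi\equiv PQ\pmod{N(K^\times)}$.

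For claim~(ii), I would first observe that a minimal $R$ is squarefree, since any square factor $P^2 = N(P)$ can be removed. The key input is to identify the integral closure of $\Q[s]$ in $K$: since $t$ satisfies $t^2 - st + 1 = 0$, this closure is $\mathcal{O}:=\Q[t,t^{-1}]$, a PID (being a localisation of the PID $\Q[t]$) whose unit group $\Q^\times\cdot t^{\Z}$ has $\sigma$-fixed part exactly $\Q^\times$. For an irreducible $P\in\Q[s]$ split or ramified in $K/k$, a generator $x$ of a prime of $\mathcal{O}$ above $P$ satisfies $(N(x)) = (P)$ in $\mathcal{O}$, so $N(x) = uP$ with $u$ a $\sigma$-invariant unit, i.e., $u\in\Q^\times$; hence $P\equiv u^{-1}\pmod{N(K^\times)}$ is the class of a constant. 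For an inert $P$, on the contrary, $v_P$ of any norm is even, so $P\not\equiv$ constant, and a valuation argument shows that distinct inert primes are independent in $k^\times/N(K^\times)$ modulo constants.

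Consequently, each class has a canonical minimal squarefree representative of the shape $c\cdot\prod P_i$, where the inert primes $P_i$ are uniquely determined by the class and $c\in\Q^\times$ is determined modulo $\Q^\times\cap N(K^\times)$. The proof concludes by showing $\Q^\times\cap N(K^\times) = (\Q^\times)^2$: if $r\in\Q^\times$ equals $N((A+B\sqrt{s^2-4})/E)$ with $\gcd(A,B,E)=1$ in $\Q[s]$, then $rE^2 = A^2 - (s^2-4)B^2$, and evaluating at $s=2$ gives $rE(2)^2 = A(2)^2$. The possibility $E(2)=0$ would force successively $(s-2)\mid A$, then (using coprimality of $s-2$ and $s+2$) $(s-2)\mid B$, contradicting primitivity; so $r = (A(2)/E(2))^2$ is a rational square. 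The main obstacle is the second paragraph: classifying the irreducibles of $\Q[s]$ modulo $N(K^\times)$ via the PID structure of $\Q[t,t^{-1}]$, with the $s=2$ specialisation being the only subtle point in the last step.
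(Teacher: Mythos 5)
Your proof is correct, but it follows a genuinely different route from the paper's. The paper never leaves $\Q[t]$: it introduces the reciprocal operation $P\mapsto \tilde P = t^{\deg P}P(t^{-1})$, strips a palindromic representative $R$ of all factors of the form $P\tilde P$ (which are norms) and of all squares, and then proves by a direct coprimality argument that the resulting squarefree $R$ with self-reciprocal prime factors \emph{divides} every other representative $Q$ of the class. That argument is deliberately algorithmic --- it is what the accompanying \textsc{Sage} script implements --- but it only yields uniqueness up to a scalar in $\Q^\times$, and the refinement of "scalar" to "square" is left implicit. Your approach instead passes to $s=t+t^{-1}$, identifies $K/k$ as the quadratic extension $\Q(s)(\sqrt{s^2-4})$ with $\Q[t,t^{-1}]$ as the integral closure of $\Q[s]$, and computes $k^\times/N(K^\times)$ by splitting behaviour: split and ramified primes of $\Q[s]$ are norms up to constants, inert primes contribute independent classes detected by the parity of their valuations, and the constants contribute $\Q^\times/(\Q^\times\cap N(K^\times))$. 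Your two cases (split/ramified versus inert) correspond exactly to the paper's dichotomy ($\tilde P_i$ coprime to $P_i$ versus $\tilde P_i$ a scalar multiple of $P_i$), but your version buys a complete structural description of the group $k^\times/N(K^\times)$ rather than just the divisibility statement, and --- a genuine plus --- it actually proves $\Q^\times\cap N(K^\times)=(\Q^\times)^2$ via the specialisation at $s=2$, which is the step needed to justify the "up to a square in $\Q^\times$" phrasing that the paper waves through with "which certainly implies the uniqueness statement." The price is heavier machinery (integral closures, splitting of primes in a PID) where the paper makes do with polynomial arithmetic.
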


The proof will also indicate an algorithm to compute the minimal~$D$. 

\begin{proof}
By definition any element is represented by 
$$ \frac{F(t)} {G(t)} \, , 
$$
for some polynomials~$F$ and~$G$, so that we may multiply by the
norm~$G(t)G(t^{-1})$ to obtain a Laurent polynomial
representative. Since it must be stable under~$\sigma $, it has the
form 
$$ \frac{R(t)} {t^d}
$$
with~$R$ palindromic of degree~$2d$, not divisible by~$t$. 

We turn to the uniqueness. Given a polynomial~$P$, consider~$\tilde P
= t^{\deg P} P(t^{-1})$, which is again a polynomial. In this notation
we have~$R = \tilde R$. The assignment~$P\mapsto \tilde P$ is
multiplicative, that is~$\widetilde{PQ} = \tilde P \tilde Q$;
moreover, performing this operation twice on a polynomial~$P$ not
divisible by~$t$ gives again~$P$ (while in general the power of~$t$
dividing~$P$ disappears from~$\tilde{\tilde{P}}$, as follows
from~$\tilde{t} = 1$). We conclude that, when~$P$ is an irreducible
polynomial, prime to~$t$, then~$\tilde P$ is also irreducible.

Now factor~$R = \prod P_i^{\alpha_i}$ into a product of powers of
prime polynomials. For a given~$P_i$, we have one of two options. The
first possibility is that~$\tilde P_i$ is prime to~$P_i$ (for example
if~$P_i = t^2 - 2$), so that~$R = \tilde R$ is divisible
by~$P_i\tilde P_i$. In this case we divide~$R / t^d$ by the norm
of~$P_i^{\alpha_i}$, which has the effect of replacing~$R$ by a
polynomial of smaller degree with all the same properties as~$R$. Do
this for all such factors~$P_i$.

The remainding factors~$P_i$ of the new~$R$ are all of the second
type, that is~$\tilde P_i$ is a scalar multiple of~$P_i$ (for example~$P_i=
t-1$). Write~$\alpha_i = 2v_i + \varepsilon_i$ with~$\varepsilon_i=0$
or~$1$, and divide~$R$ by the norm of~$P_i^{v_i}$. Do this for all the
factors.

There results a polynomial, which we still call~$R$, with the same
properties as above, and with the extra feature that it factors as a
product~$R=\prod P_i$ where each~$P_i$ is a scalar multiple of~$\tilde
P_i$ (the various~$P_i$'s being pairiwise coprime). We now show that
this~$R$ can be taken for~$D$.

Let~$Q$ be any polynomial such that~$Q/ t^q$ represents the same
element in~$k^\times / N(K^\times)$ than~$R/t^d$. We prove that~$R$
divides~$Q$, which certainly implies the uniqueness statement.

Indeed there must exist coprime polynomials~$F$ and~$G$ and an
integer~$k$ such that
$$  R G \tilde G = t^k Q F \tilde F \, .  $$
Pick a prime factor~$P_i$ in~$R$. If~$P_i$ does not divide~$Q$, then
it must divide one of~$F$ or~$\tilde F$; hence~$\tilde P_i$ divides
the other one. However since~$\tilde P_i = P_i$ up to a scalar, we see
that~$P_i^2$ divides~$QF\tilde F$, so that~$P_i^2$ divides~$RG\tilde
G$. We know that~$P_i^2$ does not divide~$R$, so~$P_i$ divides one
of~$G$ or~$\tilde G$, hence both. We see that~$F$ and~$G$ have a
factor in common, a contradiction which shows that~$P_i$ divides~$Q$.
\end{proof}

Here is another simple invariant deduced from~$\Theta
_{\Q(t)}$. Suppose~$\theta $ is a real number such that~$e^{i \theta
}$ is not algebraic (this excludes only countably many possibilities
for~$\theta$). The assignment~$t\mapsto e^{i \theta }$ gives a field
homomorphism~$\Q(t) \to \C$ which is compatible with the involutions
(on the field of complex numbers we use the standard
conjugation). There results a map
$$ \WH(\Q(t), \sigma ) \longrightarrow \WH(\C) \cong \Z \, ,  $$
which we call the~$\theta $-signature. It is clear by construction
that it agrees with the invariant~$\Theta_{e^{i \theta }}$ presented
in \S\ref{subsec-signatures}.

Looking at the trefoil again, we obtain the form over~$\C$
$$ \left(\begin{array}{rr}
2- 4 \cos(\theta )  & 0 \\
0                & 2
\end{array}\right)
$$
whose signature is~$0$ if~$0 < \theta < \frac{\pi} {3}$ and~$2$
if~$\frac{\pi} {3} < \theta < \pi$ (the diagonal entries are always
even functions of~$\theta $, so we need only consider the values
between~$0$ and~$\pi$.) We may present this information with the help
of a camembert : 
%
%
%
%
%

\begin{minipage}{.5\textwidth}
\includegraphics[width=\textwidth]{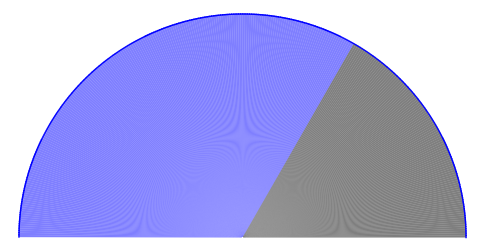}
\end{minipage}~\begin{minipage}{.5\textwidth}
\begin{tabular}{cccc}
0 & 2  \\
\includegraphics[width=0.07\textwidth]{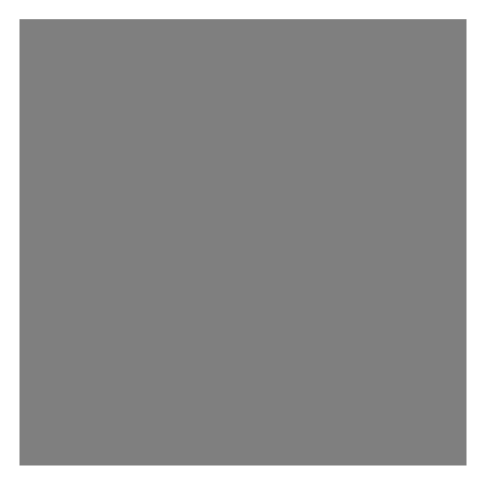}  & 
\includegraphics[width=0.07\textwidth]{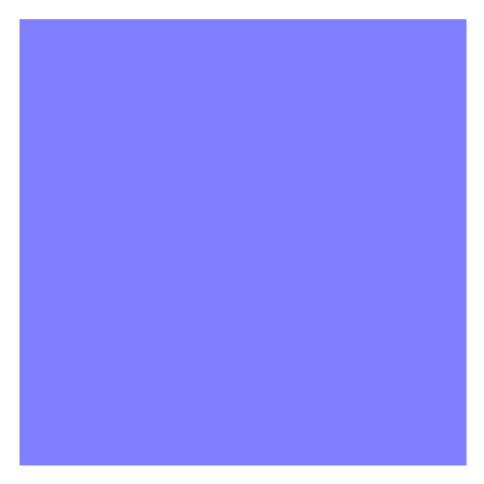}  
\end{tabular}
\end{minipage}

This figure is a link invariant.

We may get rid of the restriction on~$\theta $. Given an element
in~$\WH(\Q(t))$, pick a diagonal matrix as representative, and arrange
to have Laurent polynomials as entries. Now substitute~$e^{i \theta }$
for~$t$, obtaining a hermitian form over~$\C$, and consider the
function which to~$\theta $ assigns the signature of this form. This
is a step function~$s$, which is even and $2\pi$-periodic.

Now, whenever~$\theta $ is such that~$e^{i \theta }$ is not algebraic,
the value~$s(\theta )$ is intrinsically defined by the procedure
above, and thus does not depend on the choice of representative. Since
such~$\theta $ are dense in~$\R$, the following is well-defined: 
\[ \hat s(\theta ) = \lim_{\alpha \to \theta, \alpha > \theta }
s(\alpha ) \, .   \]
At least this provides a definition for all~$\theta $, though it is
not so easy to work with it. This may well change in the future when
we prove that it is possible to work with the ring~$\z[\frac{1} {2},
  t, t^{-1}]$ rather than the field~$\Q(t)$. This will prove that, at
least when~$\theta $ is not a ``jump'' for~$s$, the value~$\hat s
(\theta )$ agrees with~$\Theta_{e^{i \theta}}$.

To give a more complicated example, take~$\sigma_{1}^{3}
\sigma_{2}^{-1} \sigma_{1}^{2} \sigma_{3}^{1} \sigma_{2}^{3}
\sigma_{3}^{1}\in B_{4}$. The braid looks as follows:

\begin{center}
\includegraphics[width= 0.6\textwidth]{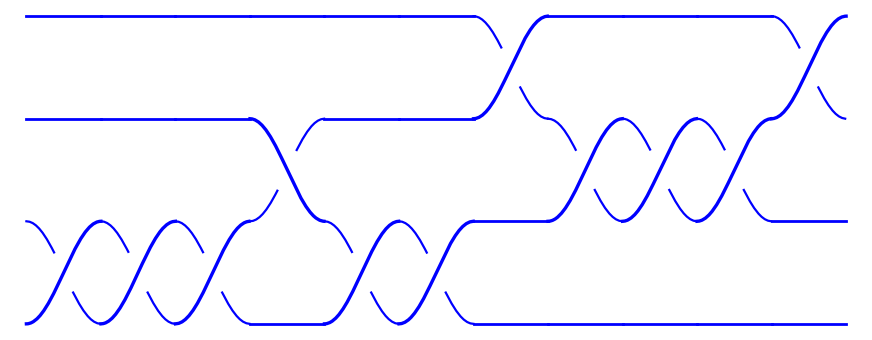}
\end{center}

The signed determinant is 
$$ \frac{3t^{6} - 9t^{5} + 15t^{4} - 17t^{3} + 15t^{2} - 9t +
  3}{t^{3}} \, ,   $$
where the numerator has minimal degree.

The camembert is

\begin{minipage}{.5\textwidth}
\includegraphics[width=\textwidth]{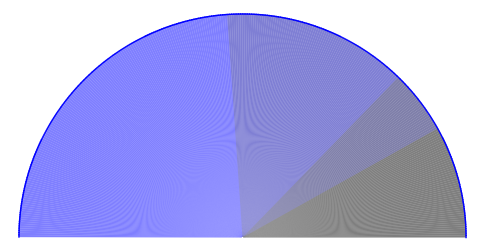}
\end{minipage}~\begin{minipage}{.5\textwidth}
\begin{tabular}{cccc}
0 & 2 & 4 & 6 \\
\includegraphics[width=0.07\textwidth]{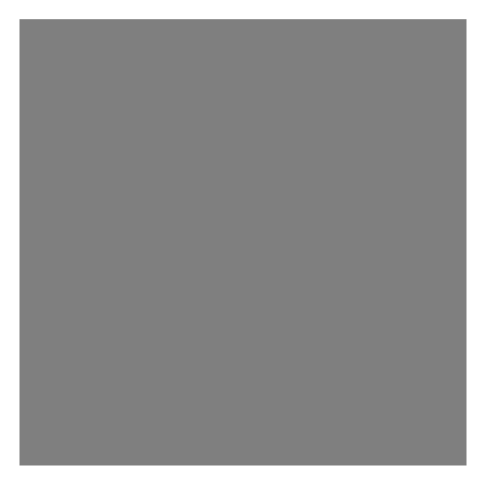}  & 
\includegraphics[width=0.07\textwidth]{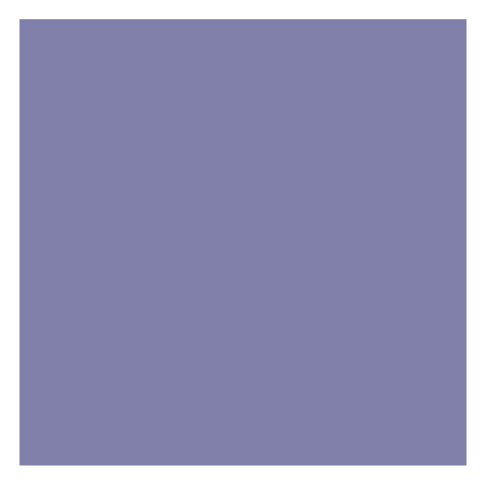} & 
\includegraphics[width=0.07\textwidth]{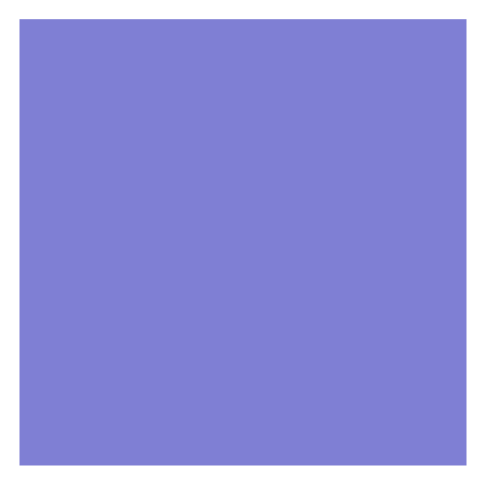}  &
\includegraphics[width=0.07\textwidth]{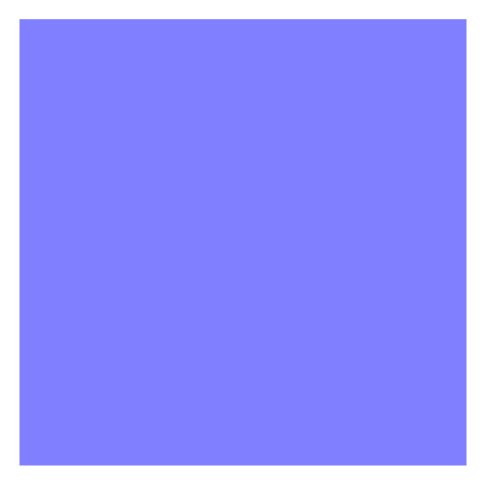} 
\end{tabular}
\end{minipage}

On the authors' webpages, the reader will find many examples of links
for which the corresponding camemberts and polynomials are given.

\subsection{Comparison with the Alexander-Conway polynomial}

Let us expand a little on the above ``coincidence'' in the case
when~$L$ is the trefoil knot, for which~$D(\Theta_{\q(t)}(L))$ happens
to be equal to the Alexander-Conway polynomial of~$L$. And first,
since there are several (related) polynomials with that name, let us
add that we consider the polynomial~$\nabla_L(s)$ as defined
in~\cite{kassel}, \S3.4.2. It is the only link invariant with values
in~$\z[s, s^{-1}]$ satisfying the skein relation 
\[ \nabla_{L_+}(s) - \nabla_{L_-}(s) = (s^{-1} - s) \nabla_{L_0}(s) \,
,   \]
in standard notation. It follows from this definition
that~$\nabla_{L}(s) = \nabla_L(-s^{-1})$, since~$L \mapsto
\nabla_{L}(-s^{-1})$ is a link invariant satisfying the same skein
relation (and taking the value~$1$ on the unknot).

It is well known that when~$L$ is a knot, rather than just a link, one
has~$\nabla_L(s) = P_L(s^2)$ for some Laurent polynomial~$P_L$. It
follows that~$P_L(t) = P_L(t^{-1})$. Thus~$P_L$ defines an element
in~$k^\times / N(K^\times)$, and we may compare it
with~$D(\Theta_{\q(t)}(L))$.

We have carried a computer experiment, whose result is that {\em for
  all the knots we have tested, the Alexander-Conway polynomial
  and~$D(\Theta_{\q(t)}(L))$ are equal in~$k^\times /
  N(K^\times)$}. The experiment was conducted on 157 (pairwise
non-isotopic) knots. It is of course reasonable to conjecture that
this holds for all knots, though we do not have a proof of this fact.



\section{Examples related to the Weil representation} \label{sec-weil}

\subsection{Background on the Weil representation} \label{subsec-background}
Consider~$\Sp_{2n}(\R)$, the symplectic group over the reals. Its
fundamental group is~$\Z$, so this group has a twofold cover, usually
called the metaplectic group and denoted by~$M_{2n}(\R)$.

The metaplectic group is famous for having a semi-simple
representation on the Hilbert space~$L^2(\R^n)$, called the Weil
representation; we refer to \cite{lionvergne} for a description. We
are chiefly interested in the fact that this representation has a {\em
  trace}, in the following sense. Let us write~$T (g)$ for the operator
corresponding to~$g$. For any smooth function~$f$ with compact support
on~$M_{2n}(\R)$, we may define an operator~$\bar T (f)$ by the obvious
averaging process, that is
$$ \bar T (f) = \int_{M_{2n}(\R)} f(g) T (g) dg \, .   $$
(Here a unimodular Haar measure is employed.) This operator has a
trace in the naive sense, namely for any Hilbert basis~$(e_r)_{r
  \in\N}$, one has
$$ \sum_r \langle \bar T (f) e_r,  e_r \rangle  < +\infty \, .   $$
What is more, the sum above can be computed as the integral 
$$ \int_{M_{2n}(\R)} f(g) \theta_n (g) \, dg \, .   $$
where~$\theta_n $ is a smooth function defined on a dense, open set.

In \cite{thomas}, Thomas gives a description of the map~$\theta_n $,
which we partially reproduce below. It turns out that~$\theta_n^{-1}$
extends to a continuous function on the whole of~$M_{2n}(\R)$, which
is conjugation-invariant.

Since our construction in the case~$K=\R$, $\alpha (t)=-1$ already
considered gives a map~$r_{2n} : \B_{2n} \to \Sp_{2n}(\R)$, and since the
cohomological considerations of \S \ref{subsec-conj-inv} guarantee
that it lifts to a map~$\B_{2n} \to M_{2n} (\R)$, one may wish to
use~$\theta_n^{-1}$ in order to produce a Markov function. It turns
out that this works! In fact the corresponding link invariant can be
expressed in terms of the signature and the Alexander-Conway
polynomial, so it is certainly not new. It is however striking to
think that the Weil representation should have any relationship to
links.

Let us turn to the proof. It will be concluded together with the
precise statement of Theorem~\ref{thm-weil-rep-main}. 



\subsection{An alternative construction of Meyer-additive functions} \label{subsec-alt-meyer}
Let us return to the setting of \S \ref{subsec-conj-inv}: we fix a
representation~$r \colon \B_n \to \U(V)$, and consider the
composition~$\rho \colon \B_n \to \U(\D{V})$. We keep the notation~$c$
for the two-cocycle produced.

In order to relate Thomas's construction to our own, we shall present
an alternative description of the normalized, Meyer-additive
function~$f$ corresponding to~$c$. This involves choosing a
lagrangian~$\ell$ in~$V$ in an arbitrary way, which is not only less
satisfying but will also only work when~$V$ is hyperbolic. For extreme
simplicity, we restrict to the case~$K=\R$ with trivial involution,
so~$\WH(K, \sigma ) = W(\R) \simeq \z$. Also~$\U(V) = \Sp(V)$ in this
case. This is enough for our purposes, though the reader can easily
generalize what follows.

Having chosen~$\ell$, we can consider the function~$\mu$ defined
on~$\Sp(V) \times \Sp(V)$ by 
$$ \mu(g, h) = \maslov{\ell}{g\ell}{gh\ell} \in W(\R)\, .   $$
This is again a two-cocyle on~$\Sp(V)$, different from~$c$, and we pull
it back to a two-cocyle on~$\B_n$ as before. Since~$H^2(B_n, \z) = 0$,
we deduce the existence of a map~$w : \B_n \to \WH(K, \sigma ) $ whose
coboundary is~$\mu$. 
The relationship between~$f$ and~$w$ is given by the following
Proposition, in which we write~$\beta \mapsto \langle \beta \rangle$
for the homomorphism~$B_n \to \z$ sending each~$\sigma_i$ to~$1$.

\begin{prop} \label{prop-alt-def-meyer}
There exists an integer~$k$ such that, for any~$ \beta \in \B_n$ we
have
\[ f( \beta ) = w(\beta ) + \maslov{\Gamma_\beta }{\Gamma_1} {\ell
  \oplus \ell}  + k \langle \beta \rangle \, .   \]
\end{prop}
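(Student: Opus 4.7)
The plan is to show that the function
\[ h(\beta) \;:=\; f(\beta) - w(\beta) - \psi(\beta), \qquad \psi(\beta) \;:=\; \maslov{\Gamma_{r(\beta)}}{\Gamma_1}{\ell \oplus \ell}, \]
is a group homomorphism $B_n \to W(\R) \cong \Z$. Once this is established, the proposition follows immediately: since the abelianization of $B_n$ is the cyclic group generated by the common class of all the $\sigma_i$ via $\beta \mapsto \langle\beta\rangle$, any such homomorphism must take the form $k\langle\beta\rangle$ for a unique integer $k$.

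The homomorphism condition on $h$ is equivalent to the two-cocycle identity $c = \mu + \delta\psi$ on $B_n$, since by construction $\delta f = c$ and $\delta w = \mu$. All three cocycles involved are pulled back via $r$ from cocycles on $\Sp(V)$, so it suffices to prove the universal identity
\[ \maslov{\Gamma_1}{\Gamma_g}{\Gamma_{gh}} \;=\; \maslov{\ell}{g\ell}{gh\ell} + \maslov{\Gamma_{gh}}{\Gamma_1}{\ell \oplus \ell} - \maslov{\Gamma_g}{\Gamma_1}{\ell \oplus \ell} - \maslov{\Gamma_h}{\Gamma_1}{\ell \oplus \ell} \]
for all $g, h \in \Sp(V)$. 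I would attack this as follows. First, apply the four-lagrangian cocycle property (ii) to $\Gamma_1, \Gamma_g, \Gamma_{gh}, \ell \oplus \ell$ in $\D{V}$; dihedral symmetry (i) identifies two of the three resulting summands with $-\psi(g)$ and $+\psi(gh)$, leaving a residual $\maslov{\Gamma_g}{\Gamma_{gh}}{\ell \oplus \ell}$. Invariance under the symplectic isometry $1 \times g^{-1}$ of $\D{V}$ (property (iv)) converts this residual to $\maslov{\Gamma_1}{\Gamma_h}{\ell \oplus g^{-1}\ell}$. A second application of the four-lagrangian identity, now to $\Gamma_1, \Gamma_h, \ell \oplus g^{-1}\ell, \ell \oplus \ell$, reduces the whole problem to showing
\[ \maslov{\Gamma_h}{\ell \oplus g^{-1}\ell}{\ell \oplus \ell} - \maslov{\Gamma_1}{\ell \oplus g^{-1}\ell}{\ell \oplus \ell} \;=\; \maslov{\ell}{g\ell}{gh\ell} - \psi(h). \]

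The main obstacle is precisely this last step: comparing a difference of Maslov indices in the doubled space $\D{V}$ with a Maslov index living purely in $V$. I would handle it by writing down the defining hermitian form on each side explicitly as a quadratic form on an appropriate quotient, exploiting the orthogonal decomposition $\D{V} = (V,-h) \oplus (V,h)$ and additivity (iii) wherever the lagrangians in play split as direct sums. The expected outcome is that the two forms agree up to Witt equivalence after the natural identifications. This style of reduction -- relating indices in a doubled symplectic space to indices in a single one via well-chosen auxiliary lagrangians -- is the same mechanism underlying the Lion--Vergne comparison of the Meyer and Maslov cocycles, so I would model my bookkeeping on theirs.
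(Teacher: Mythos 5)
Your overall strategy coincides with ours: set $f'=w+\psi$, prove that $f'$ is Meyer-additive for the cocycle $c$ (equivalently, that $c=\mu+\delta\psi$ on $\U(V)$), and conclude from $B_n^{ab}\cong\Z$ that $f-f'=k\langle\,\cdot\,\rangle$. Your first two reductions are correct and are in fact an algebraic transcription of the triangulated prism in our proof: the cocycle relation applied to $\Gamma_1,\Gamma_g,\Gamma_{gh},\ell\oplus\ell$ leaves the residual $\maslov{\Gamma_g}{\Gamma_{gh}}{\ell\oplus\ell}$, and the isometry $1\times g^{-1}$ turns it into $\maslov{\Gamma_1}{\Gamma_h}{\ell\oplus g^{-1}\ell}$, which must be shown to equal $\mu(g,h)-\psi(h)$. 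However, your third step contains a sign slip: the cocycle relation for $\Gamma_1,\Gamma_h,\ell\oplus g^{-1}\ell,\ell\oplus\ell$ already produces the term $\maslov{\Gamma_1}{\Gamma_h}{\ell\oplus\ell}=-\psi(h)$, so the identity actually left to prove is
\[ \maslov{\Gamma_h}{\ell\oplus g^{-1}\ell}{\ell\oplus\ell}-\maslov{\Gamma_1}{\ell\oplus g^{-1}\ell}{\ell\oplus\ell}=\maslov{\ell}{g\ell}{gh\ell}\,, \]
with no $-\psi(h)$ on the right; the equation you display is false in general.

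The genuine gap, though, is that this last comparison between indices in $\D{V}$ and the index $\maslov{\ell}{g\ell}{gh\ell}$ in $V$ --- which you yourself identify as the main obstacle --- is never proved; you only announce that an explicit diagonalization is ``expected'' to close it. The missing ingredient is the vanishing statement $\maslov{\Gamma_g}{\ell\oplus g\ell}{\ell\oplus\ell'}=0$ for all $g\in\U(V)$ and all lagrangians $\ell'$ of $V$, which follows immediately from Thomas's criterion (the nondegenerate representative has dimension zero) and is exactly the lemma we isolate. Granting it, your residual identity falls out: with $A=\ell\oplus g^{-1}\ell$, $B=\ell\oplus\ell$, $C=\ell\oplus h\ell$, the lemma gives $\maslov{\Gamma_h}{B}{C}=\maslov{\Gamma_h}{A}{C}=\maslov{\Gamma_1}{A}{B}=0$, the cocycle relation on $\Gamma_h,A,B,C$ then yields $\maslov{\Gamma_h}{A}{B}=\maslov{A}{B}{C}$, and additivity over $\D{V}=(V,-h)\oplus(V,h)$ identifies $\maslov{A}{B}{C}$ with $\maslov{g^{-1}\ell}{\ell}{h\ell}=\maslov{\ell}{g\ell}{gh\ell}$. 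Without this lemma or an equivalent explicit computation, your argument is incomplete at its only nontrivial point.
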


This should be compared to Proposition 1.2 in \cite{thomas}.

\begin{proof} Put~$f'(\beta ) =  w(\beta ) + \maslov{\Gamma_\beta }{\Gamma_1} {\ell
  \oplus \ell} $. We shall prove that the function~$f'$ is
  Meyer-additive with respect to~$c$. As a result $f - f'$ is a
  homomorphism~$B_n \to \z$; however the abelianization~$B_n^{ab}$ of
  the braid group~$B_n$ is isomorphic to~$\z$ via the length map, so
  the Proposition follows.

  By definition, after a minor rearrangement of the terms, we need to
  prove that
\begin{multline*}
\maslov {\Gamma_1} {\Gamma_a} {\Gamma_{ab}} - \maslov{\ell
  \oplus\ell} {\ell \oplus a\ell} {\ell \oplus ab \ell} = \\
\maslov{\Gamma_{ab}} {\Gamma_1} {\ell\oplus \ell} + \maslov{\Gamma_1}
{\Gamma_a} {\ell\oplus\ell} + \maslov{\Gamma_1} {\Gamma_b}
{\ell\oplus\ell} \, ,
\end{multline*}
for all~$a, b \in \B_n$ (here we write~$\Gamma_a$ for~$\Gamma_{\rho
  (a)}$, and so on). Applying the unitary automorphism~$1 \times a$
(in~$\D{V}$), we see that the very last term may be replaced
by~$\maslov{\Gamma_a} {\Gamma_{ab}} {\ell\oplus a\ell}$. The situation
is summed up on the following diagram:
\begin{center}
\includegraphics[width=0.3\textwidth]{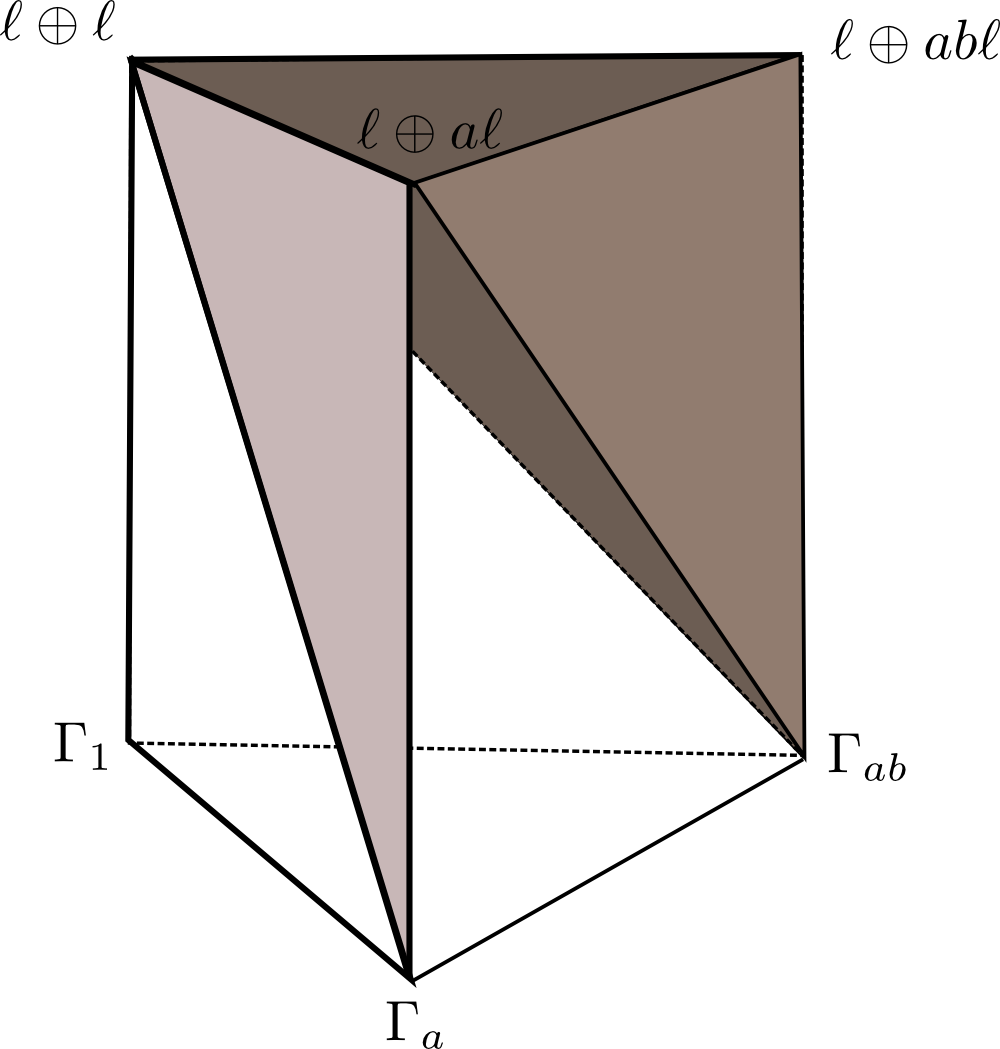}
\end{center}
On this figure, a triangulation of a triangular prism is presented
(it is meant to include~$3$-simplices). The vertices are decorated
with lagrangians, and any triangle with an orientation defines a
Maslov index unambiguously (by the dihedral symmetry property (i)). Choose
orientations consistently around the figure.

Now, the cocycle property of Maslov indices ensures that the sum of
the indices on the boundary is zero. Moreover, three of these indices
are already zero, by the lemma below: they are drawn in grey. The
result follows.
\end{proof}

We have made use of the following:

\begin{lem} Let~$g \in \U(V)$ and~$\ell, \ell'$ be lagrangians in~$V$, then
$$ \tau ( \Gamma_g ,\, \ell \oplus g\ell,\, \ell \oplus \ell') = 0 \, .   $$
\end{lem}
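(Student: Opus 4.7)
The plan is to reduce to the case $g = 1$ by an invariance argument, then to apply Thomas's criterion (Lemma~\ref{lem-thomas}) to a direct dimension count.

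First, I would observe that $1 \times g^{-1}$ is a unitary automorphism of $\D{V}$, since $g^{-1}\in\U(V)$ preserves $h$ and hence also $-h$. This automorphism sends
\[
  \Gamma_g \longmapsto \Gamma_1, \qquad \ell\oplus g\ell \longmapsto \ell\oplus \ell, \qquad \ell\oplus\ell' \longmapsto \ell\oplus g^{-1}\ell'.
\]
By property (iv) of the Maslov index, the claim reduces to showing $\tau(\Gamma_1,\ \ell\oplus\ell,\ \ell\oplus\ell'')=0$ for an arbitrary lagrangian $\ell''$ in $V$ (namely $\ell''=g^{-1}\ell'$). Since this reduction eliminates $g$ entirely, no further group-theoretic work is required.

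Next, I would verify Thomas's criterion for the three lagrangians $\ell_1=\Gamma_1$, $\ell_2=\ell\oplus\ell$ and $\ell_3=\ell\oplus\ell''$ inside $\D{V}$, writing $2n=\dim V$ and $d=\dim(\ell\cap\ell'')$. A direct inspection gives
\[
  \Gamma_1\cap(\ell\oplus\ell)=\{(x,x):x\in\ell\},\quad \Gamma_1\cap(\ell\oplus\ell'')=\{(x,x):x\in\ell\cap\ell''\},
\]
\[
  (\ell\oplus\ell)\cap(\ell\oplus\ell'')=\ell\oplus(\ell\cap\ell''),
\]
of dimensions $n$, $d$ and $n+d$ respectively, and the triple intersection has dimension $d$. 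Summing, one finds
\[
  \dim(\ell_1\cap\ell_2)+\dim(\ell_2\cap\ell_3)+\dim(\ell_3\cap\ell_1)=2n+2d=\dim\ell_1+2\dim(\ell_1\cap\ell_2\cap\ell_3),
\]
so Lemma~\ref{lem-thomas} yields $\tau(\ell_1,\ell_2,\ell_3)=0$.

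There is no real obstacle here: the only thing to be careful about is the correct description of the intersections, which must be computed in $\D{V}$ using the fact that $\Gamma_1=\{(x,x)\}$ lives diagonally and $\ell\oplus\ell$, $\ell\oplus\ell''$ split along the two factors. If one preferred not to invoke Thomas's criterion, an alternative would be to exhibit an explicit totally isotropic subspace of full dimension in the hermitian space $(\ell_1\oplus\ell_2\oplus\ell_3, H)$ defining the Maslov index, but the criterion route is shorter and avoids any calculation with the form $H$.
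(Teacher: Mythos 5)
Your proof is correct and follows essentially the same route as the paper, whose entire argument is the one-line observation that Thomas's criterion (Lemma~\ref{lem-thomas}) applies; you have simply written out the dimension count explicitly. The preliminary reduction to $g=1$ via $1\times g^{-1}$ is valid but not needed, since the same count works directly on $\Gamma_g$, $\ell\oplus g\ell$, $\ell\oplus\ell'$.
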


\begin{proof}
  Thomas's representative (as in Lemma \ref{lem-thomas}) is of
  dimension~$0$.
\end{proof}

\subsection{Thomas's model}

Since we are going to rely on the computations by Thomas
in~\cite{thomas}, we need to bridge our notation with his. 

Identify once and for all~$W(\R)$ with~$\z$ via the signature. The
``Weil character'' is the homomorphism~$\gamma \colon W(\R) \to
\C^\times$ given by~$\gamma (x)= e^{\frac{i\, \pi} {4} x}$; note that
in general there is a Weil character for each embedding~$\psi\colon \R
\to \C^\times$, and our choice corresponds to~$\psi(x) = e^{2 i \pi
  x}$ (see~\cite{lionvergne} p.\ 112). The Weil representation itself
also depends on~$\psi$, and from now we shall only consider the
representation corresponding to our choice of~$\psi$. The consistency
is important for Theorem~\ref{thm-thomas-main} below.

Let~$V$ denote~$\R^{2n}$ with the usual symplectic structure, so
that~$\Sp(V)$ refers to the group denoted~$\U(V)$ in the general
situation of \S\ref{subsec-alt-meyer}, while in
\S\ref{subsec-background} we wrote~$\Sp_{2n}(\R)$ for the same group.

Fix a lagrangian~$\ell$ for the rest of the discussion. The
cocycle~$\mu $ of \S\ref{subsec-alt-meyer} is defined, and we compose
it with~$\gamma $ to obtain a two-cocycle on~$\Sp(V)$ with values
in~$\C^\times$ (in fact, in the~$8$-th roots of unity). We are
interested in the corresponding central extension of~$\Sp(V)$ which
explicitly is the set~$M_1(V) = \Sp(V) \times \C^\times$ with multiplication 
\[ (g, t) \cdot (h, s) = (gh, ts \, \gamma(\mu (g, h)) ) \, .   \]
The group~$M_1(V)$ is ``compatible'' with our notation so far, and will
be used for explicit constructions. 

Consider now the set~$Gr(V)$ of all lagrangians in~$V$. Any pair~$(g,
t) \in M_1(V)$ gives rise to a function~$t_g \colon Gr(V) \to
\C^\times$ defined by 
\[ t_g (\ell ') = \gamma (\tau(\ell, \, g\ell, \, g \ell', \,
\ell '  ) ) \, t \, . \tag{$\dagger$}  \]
Conversely given~$g$ and the function~$t_g$, one can of course
recover~$t$ as~$t_g(\ell)$. So we can consider the group~$M_2(V)$,
canonically isomorphic to~$M_1(V)$, whose elements are all pairs~$(g,
t_g)$ where~$t_g \colon Gr(V) \to \C^\times$ is a map
satisfying~$(\dagger)$ for some complex number~$t$. A word of warning:
we borrow the notation~$t_g$ from Thomas's paper already cited, even
though it may be slightly misleading in suggesting that~$t_g$ can be
obtained from~$g$ (there is {\em no} section~$\Sp(V) \to M_2(V)$).

Finally within~$M_2(V)$, Thomas considers the subgroup~$\Mp(V)$ of
pairs~$(g, t_g)$ such that~$t_g^2 = m_g^2$, where~$m_g$ is some
function on~$Gr(V)$ whose definition is irrelevant for our
purposes. He shows that~$\Mp(V)$ is a two-fold cover of the
group~$\Sp(V)$ (see~\cite{thomas}, definition in \S5.2, Proposition
5.1 and Proposition 5.3). Since this cover is non-trivial, it must be
a model of what is universally called the metaplectic group.

We are finally in position to state Thomas's main result, computing
the values of the function~$\theta_n$ introduced in
\S\ref{subsec-background}. It is defined on~$\Mp(V)$ and Thomas uses
the notation~$\Tr \rho (g, t_g)$ (our own usage of the letter~$\rho $
is unrelated). The following is Theorem 2A in~\cite{thomas}.
\begin{thm}[Thomas] \label{thm-thomas-main}
The trace of the Weil representation is given by
\[ \theta_n(g, t_g)  = \frac{t_g(\ell) \cdot \gamma ( \maslov{\Gamma_g}
   {\Gamma_1} {\ell \oplus \ell}) } {| \det(g - 1) | ^\frac{1} {2}} \,
.  \]
\end{thm}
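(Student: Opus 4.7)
The plan is to work in the Schrödinger model, where $\Mp(V)$ acts on $L^2(\R^n)$ by operators $T(g, t_g)$ characterized (up to the sign encoded in $t_g$) by the requirement that they intertwine the Heisenberg action, as provided by the Stone--von Neumann theorem. The first step is to establish that, for any $g \in \Sp(V)$ with $\det(g - 1) \ne 0$, the operator $T(g, t_g)$ admits a tempered distributional kernel of the form
\[ K_{g, t_g}(x, y) = c(g, t_g) \, e^{i\pi Q_g(x, y)}, \]
where $Q_g$ is a non-degenerate quadratic form on $\R^n \times \R^n$ whose matrix is built from $g$ via a Cayley-type transform relative to a decomposition $V = \ell \oplus \ell'$. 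This kernel computation is carried out by writing $g$ as a product of simple symplectic transformations (linear substitutions, multiplications by Gaussians, and the Fourier transform), whose Weil lifts have classical Gaussian or chirp kernels, and composing.

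Once the kernel is in hand, I would compute $\Tr \bar T(f) = \int f(g) \, \Tr T(g, t_g) \, dg$ by interpreting the inner ``trace'' of $T(g, t_g)$ as the integral of $K_{g, t_g}(x, x)$ along the diagonal. For $g$ with $\det(g - 1) \ne 0$ this diagonal restriction is an absolutely convergent (oscillating) Gaussian in $x$, and a direct stationary-phase evaluation produces the denominator $|\det(g - 1)|^{1/2}$. This reduces the theorem to identifying the remaining constant $c(g, t_g)$ (together with the phase coming from stationary phase) with $t_g(\ell) \cdot \gamma(\maslov{\Gamma_g}{\Gamma_1}{\ell \oplus \ell})$.

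The main obstacle, and the geometric heart of the matter, is precisely this identification of the overall phase. Here I would use the defining relation $(\dagger)$ of $t_g$ together with the cocycle property (ii) of the Maslov index. Concretely, I would study how the kernel $K_{g, t_g}$ transforms when the auxiliary lagrangian used to set up the Schrödinger model is replaced by another; the discrepancy is controlled by a Maslov index of a triple of lagrangians in $V$, composed with $\gamma$. Matching this discrepancy against the functional equation $(\dagger)$ for $t_g$ pins down the phase ambiguity up to a $\Mp(V)$-invariant constant, which is then fixed by evaluating both sides on a single explicit element (for instance $g = -1$, where the left side is a Gauss sum computation and the right side is a single Maslov index readily evaluated from its definition).

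Finally, I would verify that $\bar T(f)$ is of trace class, which follows from standard estimates on oscillatory integral operators once the kernel has been made explicit, and that its trace is indeed $\int f(g) \theta_n(g, t_g) \, dg$, with $\theta_n$ smooth on the dense open set where $\det(g - 1) \ne 0$ and extended continuously (as a reciprocal) where needed. Combining this with the phase identification of the previous step yields the stated formula for $\theta_n(g, t_g)$.
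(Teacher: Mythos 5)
First, a point of comparison: the paper does not prove this statement at all --- it is quoted verbatim (modulo notation) as Theorem 2A of Thomas's paper \cite{thomas}, so there is no in-paper proof to measure your attempt against. Judged against the actual proof in the literature, your outline follows essentially the standard (and essentially Thomas's) route: realize $\Mp(V)$ on $L^2(\R^n)$ via Stone--von Neumann, show that for $\det(g-1)\neq 0$ the operator has a Gaussian (chirp) kernel governed by a Cayley-type form, take the diagonal integral to produce $|\det(g-1)|^{-1/2}$ together with an eighth-root-of-unity phase from the signature of the quadratic form, and identify that phase with $t_g(\ell)\cdot\gamma(\maslov{\Gamma_g}{\Gamma_1}{\ell\oplus\ell})$ using the defining relation $(\dagger)$ and the cocycle property of the Maslov index. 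So the architecture is right.

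Two points in your write-up are genuine soft spots rather than routine verifications. First, the convergence claims are wrong as stated: the diagonal restriction $x\mapsto K_{g,t_g}(x,x)$ has constant modulus $|c(g,t_g)|$, so its integral over $\R^n$ is a Fresnel integral, \emph{not} absolutely convergent; it must be defined by regularization (e.g.\ $e^{-\epsilon|x|^2}$ and $\epsilon\to 0$) or, better, one must only take the trace after smearing against $f(g)\,dg$, which is where the honest trace-class argument lives. Second, and more importantly, the identification of the stationary-phase signature with the specific triple index $\maslov{\Gamma_g}{\Gamma_1}{\ell\oplus\ell}$ \emph{is} the content of the theorem, and your proposal only describes a plan for it. The ``fix the constant at $g=-1$'' step presupposes that the ratio of the two sides is a genuinely $\Mp(V)$-invariant (indeed multiplicative) quantity; but both sides are only defined on the non-group open set $\{\det(g-1)\neq 0\}$, so establishing that the discrepancy is a character (and hence trivial, $\Mp(V)$ being a perfect central extension of the perfect group $\Sp(V)$) requires a careful continuity-plus-cocycle argument that you have not supplied. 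Until that step is carried out, the proposal is a correct roadmap but not a proof; for the purposes of this paper the statement should simply remain a citation of \cite{thomas}.
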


Let us introduce a few more groups. Let~$M_1'(V)$ be the subgroup
of~$M_1(V)$ of pairs~$(g, t)$ with~$t$ an~$8$-th root of unity, and
let~$M_2'(V)$ be the corresponding subgroup of~$M_2(V)$. Since the
function~$m_g$ takes its values also in the~$8$-th roots of unity, it
is clear that~$\Mp(V) \subset M_2'(V)$. To finish with, recall the
group~$\widetilde{\Sp(V)}$, made of pairs~$(g, n)$ with~$g\in \Sp(V)$
and~$n\in \z$ with the multiplication using the cocycle~$\mu $.

\begin{lem} \label{lem-normal}
The group~$\Mp(V)$ is normal in~$M_2'(V)$.
\end{lem}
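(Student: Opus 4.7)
The plan is to write $M_2'(V) = \Mp(V) \cdot Z$, where $Z = \ker(M_2'(V) \to \Sp(V))$, and to observe that $Z$ is central in $M_2'(V)$; granted these two points, normality is automatic, for then any $x = mz \in M_2'(V)$ (with $m \in \Mp(V)$ and $z \in Z$) and any $m' \in \Mp(V)$ satisfy
\[ xm'x^{-1} = mzm'z^{-1}m^{-1} = mm'm^{-1} \in \Mp(V) \, . \]

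To verify the centrality of $Z$, I would pass to the isomorphic model $M_1'(V)$, in which $Z$ is the set of pairs $(1, t)$ with $t^8 = 1$. Computing the twisted products $(1, t)(h, s)$ and $(h, s)(1, t)$ in $M_1'(V)$, they differ from the naive products by the scalars $\gamma(\mu(1, h))$ and $\gamma(\mu(h, 1))$ respectively, where each of these Maslov indices is of the form $\tau(\ell, \ell, \ell')$, i.e.\ has a repeated lagrangian. Using the dihedral symmetry (i), any such index equals its own negative and therefore vanishes in the torsion-free Witt ring $W(\R) \cong \z$; Thomas's criterion (Lemma~\ref{lem-thomas}) also applies verbatim and yields the same conclusion. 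Hence the $\gamma$-factors equal $1$, and $(1, t)$ commutes with every element.

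The factorization $M_2'(V) = \Mp(V) \cdot Z$ is a general extension-theoretic observation: both $\Mp(V)$ and $M_2'(V)$ surject onto $\Sp(V)$ (the first by construction as a two-fold cover, and the second because it contains $\Mp(V)$), so given $x \in M_2'(V)$ with image $g \in \Sp(V)$ one picks $m \in \Mp(V)$ with the same image and observes that $xm^{-1} \in M_2'(V)$ projects to $1$, hence lies in $Z$. There is no real obstacle to the argument; the only substantive input is the vanishing of the Maslov index at a degenerate triple, which is already baked into the formalism of~\S\ref{subsec-maslov}.
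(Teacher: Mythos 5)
Your proof is correct, and it takes a genuinely different route from the one in the paper. You argue purely algebraically: the kernel $Z$ of the projection $M_2'(V)\to\Sp(V)$ is central because the cocycle $\mu$ is normalized --- indeed $\mu(1,h)=\maslov{\ell}{\ell}{h\ell}$ and $\mu(h,1)=\maslov{\ell}{h\ell}{h\ell}$ are Maslov indices of triples with a repeated lagrangian, which vanish (your two justifications both work here: Thomas's criterion applies verbatim, and the dihedral symmetry gives $2\tau=0$, which suffices since $W(\R)\cong\z$ is torsion-free --- though note that only the first argument would survive over a field whose Witt ring has $2$-torsion). Since $\Mp(V)$ surjects onto $\Sp(V)$, you get $M_2'(V)=\Mp(V)\cdot Z$ with $Z$ central, and normality is formal; in fact your argument shows that \emph{any} subgroup of $M_2'(V)$ surjecting onto $\Sp(V)$ is normal. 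The paper instead argues topologically, importing from \cite{lionvergne} the continuous character $s\colon\widetilde{\Sp(V)}\to\z/4$ whose fibres are the connected components, deducing that $M_1'(V)$ has four components each of which is a two-fold cover of $\Sp(V)$, and identifying $\Mp(V)$ with the identity component (which is automatically normal) by a connectedness-and-degree count. Your approach is more elementary and field-independent, needing no topology at all; the paper's approach buys the additional identification of $\Mp(V)$ as the identity component of $M_1'(V)$, which pins it down among the index-four subgroups, though that extra information is not used elsewhere in the paper.
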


\begin{proof}
In~\cite{lionvergne} it is proved that~$\widetilde{\Sp(V)}$ has four
connected component (for some appropriate topology which is not the
product topology on~$\Sp(V)\times \z$); more precisely in 1.7.11 in
{\em loc.\ cit.} one finds the definition of a continuous character
\[ s \colon \widetilde{\Sp(V)} \longrightarrow \z/4  \]
whose fibres are exactly the connected components. What is more,
$s(g, n)$ depends only on~$n$ modulo~$4$.

The Weil character~$\gamma $ gives a continuous and surjective map 
\[ \widetilde{\Sp(V)} \longrightarrow M_1'(V) \, ,  \]
and it is clear that~$s$ factors through~$M_1'(V)$. As a result, the
group~$M_1'(V)$ also has four connected components. They are
homeomorphic to each other, and each component is thus a two-fold cover
of~$\Sp(V)$, since~$M_1'(V)$ is an eight-fold cover.

The connected component~$G$ of the identity is a
normal subgroup of~$M_1'(V)$, so it suffices to prove that~$\Mp(V) =
G$ (identifying~$M_1'(V)$ and~$M_2'(V)$). However~$\Mp(V) \subset G$
since~$\Mp(V)$ is connected, and since these groups are both two-fold
covers of~$\Sp(V)$ one must have~$\Mp(V) = G$.
\end{proof}

\subsection{Main result}

Let the notation be as in \S \ref{subsec-background}, so that we work
over~$K=\R$ with the Burau representation at~$t=-1$, written~$r_{2n}
\colon B_{2n} \to \Sp(V)$. As in \S\ref{subsec-alt-meyer}, we pick a
one-cocycle~$w$ whose boundary is the two-cocycle~$\mu $, a
lagrangian~$\ell$ being chosen. The map~$\beta \mapsto (r_{2n}(\beta
), \gamma (w( \beta )))$ defines a lift 
\[ \widetilde{r_{2n}} \colon B_{2n} \longrightarrow M_1'(V)  \]
of~$r_{2n}$. Let us show that we can arrange for~$\widetilde{r_{2n}}$
to take its values in the metaplectic group. Indeed, one may add any
homomorphism~$B_n \to \z$ to~$w$, so we can certainly
have~$w(\sigma_1)$ taking any convenient value, and in particular we
can have~$\widetilde{r_{2n}}(\sigma_1) \in \Mp(V)$ (as before we
see~$\Mp(V)$ as a subgroup of~$M_1(V)$). Since~$\sigma_i$ is
conjugated to~$\sigma_1$ in~$B_n$, and in virtue of
Lemma~\ref{lem-normal}, this forces the image of~$\widetilde{r_{2n}}$
to lie entirely in~$\Mp(V)$.

\begin{thm} \label{thm-weil-rep-main}
There exists an integer~$k = k_n$ such that 
\[ e^{ \frac{ki \pi} {4} \langle \beta \rangle} \, \theta_n^{-1} (\widetilde{r_{2n}}(\beta )) = e^{-\frac{i\pi} {4} \Theta_\R( \hat
  \beta ) } | \det(r_{2n}(\beta ) - Id) |^{\frac{1} {2} } \, .  \]
In particular the collection~$( e^{ \frac{k_n i \pi} {4}}
\theta_n^{-1} \circ \widetilde{r_{2n}})_{n \ge 1}$ can be extended to
a Markov function.
\end{thm}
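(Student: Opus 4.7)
The plan is to unpack Thomas's formula from Theorem~\ref{thm-thomas-main} applied to $\widetilde{r_{2n}}(\beta)=(r_{2n}(\beta),\gamma(w(\beta)))\in\Mp(V)$ and then to recognize the resulting expression via Proposition~\ref{prop-alt-def-meyer}.

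First, from the defining formula~$(\dagger)$ of $M_2(V)$ one has $t_g(\ell)=t$ for any pair $(g,t)\in M_1(V)$. Plugging $g=r_{2n}(\beta)$ and $t=\gamma(w(\beta))$ into Theorem~\ref{thm-thomas-main} and using multiplicativity of $\gamma$, one obtains, on the open locus where the denominator is nonzero,
\[
\theta_n\bigl(\widetilde{r_{2n}}(\beta)\bigr) \;=\; \frac{\gamma\bigl(w(\beta) + \maslov{\Gamma_{r_{2n}(\beta)}}{\Gamma_1}{\ell\oplus\ell}\bigr)}{\bigl|\det(r_{2n}(\beta)-Id)\bigr|^{1/2}}.
\]
Now Proposition~\ref{prop-alt-def-meyer} says that there is an integer $k$ (depending on $n$ and on the choices of $\ell$ and of the cochain $w$) such that
\[
w(\beta) + \maslov{\Gamma_{r_{2n}(\beta)}}{\Gamma_1}{\ell\oplus\ell} \;=\; f_{2n}(\beta) - k\langle\beta\rangle \;=\; \Theta_\R(\hat\beta) - k\langle\beta\rangle,
\]
the last equality by Theorem~\ref{thm-main}. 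Taking reciprocals and multiplying through by $e^{-ki\pi\langle\beta\rangle/4}$ yields the asserted identity with $k_n:=-k$. On the singular locus $\det(r_{2n}(\beta)-Id)=0$ --- which, owing to the universal $\B_{2n}$-fixed vector $v_{2n}\in V_{2n}$, in fact contains the entire image of $\widetilde{r_{2n}}$ --- both sides vanish: the right-hand side by the explicit determinant factor, the left-hand side by the continuous extension of $\theta_n^{-1}$ provided by Thomas.

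For the Markov-function claim I would verify properties (i) and (ii) directly on the right-hand side. Conjugation invariance is immediate, since $\Theta_\R$ is a link invariant and $\det(r_{2n}(\gamma\beta\gamma^{-1})-Id)=\det(r_{2n}(\beta)-Id)$ by similarity. For the stabilization move, $\Theta_\R(\widehat{\iota(\beta)\sigma_n^{\pm}})=\Theta_\R(\hat\beta)$ by Theorem~\ref{thm-main}, while the column-reduction inside the proof of Proposition~\ref{prop-petit-markov} shows that at $t=-1$ the extra row and column contribute a factor of modulus~$1$, so $|\det(r_{n+1}(\iota(\beta)\sigma_n^{\pm})-Id)|=|\det(r_n(\beta)-Id)|$. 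Extending to odd indices via the inclusions built into Remark~\ref{rmk-cofinal} thus produces a Markov function.

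The main delicate point I expect is the coherent bookkeeping of the constants $k_n$ across $n$. The cochain $w$ is only defined up to a homomorphism $\B_{2n}\to\z$, and the shift used to arrange $\widetilde{r_{2n}}(\sigma_1)\in\Mp(V)$ via Lemma~\ref{lem-normal} alters the integer $k$ returned by Proposition~\ref{prop-alt-def-meyer}. One must check that a compatible choice can be made uniformly in $n$, so that the sequence $(e^{k_ni\pi\langle\cdot\rangle/4}\theta_n^{-1}\circ\widetilde{r_{2n}})_{n\ge 1}$ indeed glues with its extension to odd indices into a single Markov function rather than merely a pointwise identity for each~$n$.
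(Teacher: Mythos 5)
Your route is the same as the paper's: plug $\widetilde{r_{2n}}(\beta)$ into Theorem~\ref{thm-thomas-main}, use $t_g(\ell)=t=\gamma(w(\beta))$, and convert the exponent $w(\beta)+\maslov{\Gamma_{r_{2n}(\beta)}}{\Gamma_1}{\ell\oplus\ell}$ into $f_{2n}(\beta)-k\langle\beta\rangle=\Theta_\R(\hat\beta)-k\langle\beta\rangle$ via Proposition~\ref{prop-alt-def-meyer}. The algebra is right, the direct verification that the right-hand side is a Markov function matches what the paper does, and your worry about the constants $k_n$ is harmless: the statement allows $k_n$ to depend on $n$, and the right-hand side is a Markov function on its own, so no coherence across $n$ is needed.

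The real issue is the parenthetical you tucked into the treatment of the singular locus. You are correct that the fixed vector $v_{2n}$ (which at $t=-1$ is $(1,-1,\ldots,-1)\ne 0$) forces $\det(r_{2n}(\beta)-Id)=0$ for \emph{every} $\beta\in\B_{2n}$. But you cannot then say ``both sides vanish'' and move on: that observation does not complete the proof, it empties the theorem. It means the open set on which you performed the main computation is disjoint from the image of $\widetilde{r_{2n}}$, the displayed identity collapses to $0=0$, and the invariant extracted from the Weil representation is identically zero --- which contradicts the paper's own gloss that $|\det(r_{2n}(\beta)-Id)|$ is the absolute value of the Alexander--Conway polynomial at $\sqrt{-1}$ (that value is $1$ for the unknot, not $0$). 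So either the statement is vacuous as literally written, or --- what must be intended --- one needs Thomas's character formula for elements with $\ker(g-1)\ne 0$, in which the denominator is the determinant of $g-1$ restricted to a complement of its kernel; this is the analogue of the finite-field formula (Theorem 2B) that the paper invokes at the very end and which explicitly involves $\dim\ker(r_n(\beta)-Id)$. With that version, the bookkeeping against Proposition~\ref{prop-alt-def-meyer} has to be redone and the relation to the Alexander--Conway polynomial becomes the actual content. You should surface this as an inconsistency in the statement rather than fold it silently into the argument; as it stands, your proof (and, for that matter, the paper's one-line proof) establishes only the trivial identity.
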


In other words, after a simple renormalization involving the braid
``exponent sum'', the trace of the Weil representation yields an
oriented link invariant. We shall not attempt to compute the value
of~$k_n$, as the link invariant does not contain ``new'' information
anyway (see comments in the proof).



\begin{proof}
Note that the right hand side of the equation to prove defines a
Markov function, since~$ | \det(r_{2n}(\beta ) - Id) |$ is the
absolute value of the Alexander-Conway polynomial at~$\sqrt{-1}$. Of
course~$\Theta_\R(\hat \beta )$ is the signature of~$\hat \beta $, as
already observed.

As for the equality itself, it now follows directly from
Theorem~\ref{thm-thomas-main} combined with
Proposition~\ref{prop-alt-def-meyer}. 
\end{proof}

\subsection{Finite fields}
The symplectic groups over finite fields also have a Weil
representation (with no need to go to a two-sheeted cover), so one may
wish to try and use it to obtain more link invariants. This was
investigated by Goldschmidt and Jones in \cite{jones}. Here is our
interpretation of their results.

Pick~$K = \F_p(t)$ for an odd prime~$p$, with involution
satisfying~$\sigma (t)=t^{-1}$, and choose~$\alpha $ so that~$\alpha
(t)=t$. Our general method produces an invariant~$\Theta_{\F_p(t)}$
with values in~$\WH(\F_p(t), \sigma )$. We would like to
specialize~$t$ to a value in some finite field~$\F_q$ ; however, there
is no field homomorphism~$\F_p(t) \to \F_q$, so this idea
cannot be pursued literally. When we prove the existence of an
invariant in the Witt ring of~$\Z[\frac{1} {2}, t, t^{-1}]$, we shall
be able to specialize~$t$ directly. For the time being, here is an
{\em ad hoc} trick, which amounts to the description given by
Goldschmidt and Jones.

Let~$u= t+t^{-1}$, so that the fixed field of~$\sigma $ is~$k=
\F_p(u)$. Recall that we have an anti-hermitian space~$V_n$, over the
field~$K$, on which~$\B_n$ acts unitarily. As explained in \S
\ref{subsec-maslov}, we may consider the ``real part'' of the
anti-hermitian form, which is a symplectic form on~$(V_n)_k$, that
is~$V_n$ viewed as a~$k$-vector space. This allows the formation of
Maslov indices, and clearly yields a link invariant, whose value on a
link~$L$ is simply the image of~$\Theta_{\F_p(t)}(L)$ under the
map~$\WH( \F_p(t), \sigma ) \to W(\F_p(u))$. What is noticeable at
this point is that the matrices giving the action of~$\B_n$
on~$(V_n)_k$, as well as the symplectic form, all have their entries
in~$\F_p[u, u^{-1}]$ and thus may be specialized to a non-zero value
of~$u$ chosen in any finite field~$\F_q$ of characteristic~$p$. If we
define our Maslov indices {\em then}, we end up with an invariant with
values in~$W(\F_q)$, which we may call~$\Theta_{\F_q}$.

Let us go back to the trace of the Weil representation
of~$\Sp_{2n}(\F_q)$. It was also computed by Thomas in
\cite{thomas}. His formula (Theorem 2B) shows that the trace of an
element of the form~$r_n( \beta )$, where~$r_n : \B_n \to
\Sp_{2n}(\F_q)$ is the representation just defined, can be expressed
in terms of~$\Theta_{\F_q}(\hat \beta )$ and the invariant of
Proposition \ref{prop-petit-markov} (details will be omitted). In
particular, it gives a link invariant.

\bibliography{myrefs}

\newcommand{\noopsort}[1]{} \newcommand{\printfirst}[2]{#1}
  \newcommand{\singleletter}[1]{#1} \newcommand{\switchargs}[2]{#2#1}
\begin{thebibliography}{CLM76}

\bibitem[CLM76]{cohen}
Frederick~R. Cohen, Thomas~J. Lada, and J.~Peter May.
\newblock {\em The homology of iterated loop spaces}.
\newblock Lecture Notes in Mathematics, Vol. 533. Springer-Verlag, Berlin,
  1976.

\bibitem[GG05]{ghys}
Jean-Marc Gambaudo and {\'E}tienne Ghys.
\newblock Braids and signatures.
\newblock {\em Bull. Soc. Math. France}, 133(4):541--579, 2005.

\bibitem[GJ89]{jones}
David~M. Goldschmidt and V.~F.~R. Jones.
\newblock Metaplectic link invariants.
\newblock {\em Geom. Dedicata}, 31(2):165--191, 1989.

\bibitem[KT08]{kassel}
Christian Kassel and Vladimir Turaev.
\newblock {\em Braid groups}, volume 247 of {\em Graduate Texts in
  Mathematics}.
\newblock Springer, New York, 2008.
\newblock With the graphical assistance of Olivier Dodane.

\bibitem[Lic97]{lick}
W.~B.~Raymond Lickorish.
\newblock {\em An introduction to knot theory}, volume 175 of {\em Graduate
  Texts in Mathematics}.
\newblock Springer-Verlag, New York, 1997.

\bibitem[LV80]{lionvergne}
G{\'e}rard Lion and Mich{\`e}le Vergne.
\newblock {\em The {W}eil representation, {M}aslov index and theta series},
  volume~6 of {\em Progress in Mathematics}.
\newblock Birkh\"auser Boston, Mass., 1980.

\bibitem[MH73]{milnor}
John Milnor and Dale Husemoller.
\newblock {\em Symmetric bilinear forms}.
\newblock Springer-Verlag, New York, 1973.
\newblock Ergebnisse der Mathematik und ihrer Grenzgebiete, Band 73.

\bibitem[Rei83]{reide}
K.~Reidemeister.
\newblock {\em Knot theory}.
\newblock BCS Associates, Moscow, Idaho, 1983.
\newblock Translated from the German by Leo F. Boron, Charles O. Christenson
  and Bryan A. Smith.

\bibitem[Tho08]{thomas}
Teruji Thomas.
\newblock The character of the {W}eil representation.
\newblock {\em J. Lond. Math. Soc. (2)}, 77(1):221--239, 2008.

\end{thebibliography}
\bibliographystyle{alpha}

\end{document}